\titleformat{\section}[block]{\large\scshape\centering}{\thesection.}{1em}{} % Change the look of the section titles
\titleformat{\subsection}[block]{\scshape\centering}{\thesubsection.}{1em}{} % Change the look of the section titles
\newtheorem{theorem}{Theorem}[section]
\newtheorem*{theorem*}{Theorem}
\newtheorem{lemma}[theorem]{Lemma}
\newtheorem*{lemma*}{Lemma}
\newtheorem{proposition}[theorem]{Proposition}
\newtheorem{corollary}[theorem]{Corollary}
\newtheorem*{corollary*}{Corollary}
\newtheorem{definition}[theorem]{Definition}
\theoremstyle{definition}
\newtheorem{example}[theorem]{Example}
\newtheorem{remark}[theorem]{Remark}
\def\Ozgur{{\"O}zg{\"u}r}
\def\CC{\mathbb C}
\def\RR{\mathbb R}
\def\ZZ{\mathbb Z}
\def\ca{\mathrm{ca}}
\def\co{\mathcal{C}}
\newcommand\module{\mathrm{mod}}
\newcommand\MCM{\mathrm{MCM}}
\DeclareMathOperator{\coker}{coker}
\DeclareMathOperator{\sEnd}{\underline{End}}
\DeclareMathOperator{\st}{st}
\DeclareMathOperator{\sHom}{\underline{Hom}}
\newcommand\sMCM{\underline{\mathrm{MCM}}}
\DeclareMathOperator{\gldim}{gldim}
\DeclareMathOperator{\ann}{ann}
\DeclareMathOperator{\annT}{ann_{\mathcal{T}}}
\DeclareMathOperator{\Hom}{Hom}
\DeclareMathOperator{\HomT}{Hom_{\mathcal{T}}}
\DeclareMathOperator{\End}{End}
\DeclareMathOperator{\EndT}{End_{\mathcal{T}}}
\DeclareMathOperator{\Ext}{Ext}
\DeclareMathOperator{\sExt}{\underline{Ext}}
\DeclareMathOperator{\sann}{\underline{ann}}
\def\TT{\mathcal{T}}
\def\nrml{\overline}
\title{\scshape{The Cohomology Annihilator of a Curve Singularity}}
\author{\scshape{\Ozgur \; Esentepe} \footnote{Department of Mathematics, University of Toronto, Toronto, ON M5S 2E4 Canada; e-mail: ozgures@math.toronto.edu}}
\date{}
\begin{document}

\maketitle

\begin{center}
    \textit{Dedicated to the memory of Ragnar-Olaf Buchweitz}
\end{center}

\vspace{1cm}

\begin{abstract}
The aim of this paper is to study the theory of cohomology annihilators over commutative Gorenstein rings. We adopt a triangulated category point of view and study the annihilation of stable category of maximal Cohen-Macaulay modules. We prove that in dimension one the cohomology annihilator ideal and the conductor ideal coincide under mild assumptions. We present a condition on a ring homomorphism between Gorenstein rings which allows us to carry the cohomology annihilator of the domain to that of the codomain. As an application, we generalize the Milnor-Jung formula for algebraic curves to their double branched covers. We also show that the cohomology annihilator of a Gorenstein local ring is contained in the cohomology annihilator of its Henselization and in dimension one the cohomology annihilator of its completion. Finally, we investigate a relation between the cohomology annihilator of a Gorenstein ring and stable annihilators of its noncommutative resolutions.
\end{abstract}

\section{Introduction}

The theory of cohomology annihilators have been investigated in a series of papers by Iyengar and Takahashi in the last decade. The authors have utilized cohomology annihilators in order to study generation of categories (abelian or triangulated). The aim of this paper is to study the theory of cohomology annihilators over Gorenstein rings. In this setting, the cohomology annihilator ideal is the annihilator of the stable category of maximal Cohen-Macaulay modules.

We start by collecting some known facts about annihilation of a category which is linear over a commutative ring. This is the content of Section 2. We say that a ring element annihilates such a category if and only if it annihilates the endomorphism ring of every indecomposable object. We show that in the case of commutative Gorenstein rings, the annihilator of the stable category of maximal Cohen-Macaulay modules is the cohomology annihilator ideal. 
 \begin{lemma*}[\ref{mainlemma}]
 	For a Gorenstein ring $R$, the cohomology annihilator ideal is the annihilator of $\sMCM(R)$.
 \end{lemma*}
We also describe the annihilation in terms of complete resolutions and matrix factorizations.

We devote Section 3 to examples. These examples and their relations to Jacobian ideals are the main motivation for this project. Indeed, it is known that for a reasonably large class of rings the cohomology annihilator ideal contains the Jacobian ideal. It is an observation due to Ragnar-Olaf Buchweitz that for one dimensional local rings of finite maximal Cohen-Macaulay type if the codimension of the Jacobian ideal is even, then the codimension of the cohomology annihilator ideal is exactly the half of the former. This observation, which was privately shared with the author of this paper, is the starting point of this project. We use the examples of this section in Section 6 when we talk about the stable annihilator of a noncommutative resolutions.
	
In Section 4, we show that the conductor ideal of a one dimensional commutative Gorenstein ring contains the cohomology annihilator ideal. Due to a result of Wang, the conductor of a one dimensional Noetherian reduced complete local ring is contained in the cohomology annihilator ideal. Combining Wang's result with ours, we identify the cohomology annihilator ideal with the conductor ideal. 

\begin{theorem*}[\ref{maintheorem}]
 Let $ R $ be a one dimensional reduced complete Gorenstein local ring. Then,
		\begin{align*}
		\ca(R) = \co(R).
		\end{align*}
\end{theorem*}

The result of Section 4 explains Buchweitz's observation. The main ingredient here is the Milnor-Jung formula from geometry. Replacing the delta invariant with the codimension of the cohomology annihilator, we generalize the Milnor-Jung formula to double branched covers of plane curves. We also investigate the relation between the cohomology annihilators of a hypersurface and its $n$-branched covers (See Section 5).
	
Section 5 is devoted to the weak MCM extending property of a ring homomorphism between two Gorenstein rings. We show that if this property is satisfied, then the image of the cohomology annihilator of the domain is contained in the cohomology annihilator of the codomain. We give three applications of this one of which is the generalization of Milnor-Jung formula as explained above. 

 \begin{theorem*}[\ref{mainbranchedtheorem}]
    Suppose $k$ is an algebraically closed field with $\mathrm{char} k \neq 2$. Let $R = k[\![ x,y]\!] / (f)$ be the coordinate ring of a reduced curve singularity. For $S = k[\![x,y, z_1, \ldots, z_l]\!]/(f+z_1^2 + \ldots + z_l^2)$, we have
    \begin{align*}
     \dim_k \frac{S}{J_{S}} = 2 \dim_k \frac{S}{\ca(S)} -  r + 1
    \end{align*}
    where $r$ is the number of branches of the curve $f$ at its singular point.
   \end{theorem*}
We also show that in dimension one, the cohomology annihilator of a Gorenstein local ring is equal to the intersection of the ring and the cohomology annihilator of its completion (provided that the completion is reduced). In another direction, with no assumption on the dimension, we show that the cohomology annihilator of a Gorenstein local ring is contained in the cohomology annihilator of its Henselization.

Most of our examples have a single object whose stable annihilator is equal to the cohomology annihilator ideal. Moreover, in all of these examples this single object can be chosen as a noncommutative resolution. In Section 6, we present examples of this. We study the relation between the cohomology annihilator of a Gorenstein ring and the stable annihilator of a noncommutative resolution. We show that the two are equal up to radicals in general.

\begin{theorem*}[\ref{ncrtheorem}]
 Let $R$ be a Gorenstein ring and $\Lambda$ be a finite $R$-algebra such that the canonical map $R \to \Lambda$ is a split monomorphism. Assume that $\gldim \Lambda = \delta < \infty$ and that $\Lambda \in \MCM(R)$. Then,
 \begin{align*}
  (\sann_R\Lambda)^{\delta + 1} \subseteq \ca(R) \subseteq \sann_R(\Lambda).
 \end{align*}
\end{theorem*}
 	
\paragraph{Acknowledgements.} My biggest thanks are due to Ragnar-Olaf Buchweitz, my supervisor and mentor, who introduced me to the motivating problem of this paper. Thanks are also due to Graham Leuschke, my cosupervisor, who encouraged me in various stages of this project. I also want to thank Benjamin Briggs and Vincent Gelinas for fruitful discussions at University of Toronto. Finally, a thank you to Srikanth Iyengar and Ryo Takahashi for their hospitality and their comments on a preliminary version of this paper.

\section{Background and Preliminaries}

In this section, we present definitions, preliminary lemmas and notation. Throughout this paper, $R$ will always denote a commutative Noetherian ring and all modules are assumed to be finitely generated.
 
\subsection{Annihilation of a Triangulated Category.} Let $R$ be a commutative ring and $\TT$ be an $R$-linear category with shift functor $\Sigma$. We say that $r \in R$ annihilates $X \in \TT$ if $x \in \ann_R \EndT X$ and we write $r \in \annT X$ in this case. We say that $r$ annihilates $\TT$ if $r \in \ann_R \EndT X$ for every $X \in \TT$. We put
\begin{align*}
 \ann_R \TT = \{ r \in R \; : \; r \text{ annihilates } \TT\}
\end{align*}

Note that for any $X, Y \in \TT$, $\HomT(X,Y)$ is naturally a module over $\EndT X$ and therefore $\annT X \subseteq \ann_R \HomT(X,Y)$. Consequently, we have $\annT(X \oplus Y) = \annT X \cap \annT Y$. On the other hand, $\EndT X \cong \EndT \Sigma X$ and so $\annT X  = \annT \Sigma X$. Hence, we conclude that if $r \in \annT X$, then $r$ annihilates every object in $\langle X \rangle$ - the smallest subcategory containing $X$ and closed under finite direct sums, direct summands and $\Sigma$.
 
We put $\langle X \rangle_1 = \langle X \rangle$. Inductively, we define $\langle X \rangle_n$ to be the full subcategory of $\TT$ consisting of objects $B$ such that there is a distinguished triangle $A \to B \to C \to \Sigma A$ with $A \in \langle X \rangle_{n-1}$ and $C \in \langle X \rangle$. We say that $X$ is a strong generator for $\TT$ if $\TT = \langle X \rangle_n$ for some integer $n$. Note that applying the cohomological functor $\HomT(-,B)$ to the triangle $A \to B \to C \to \Sigma A$ yields an exact sequence
 \begin{align*}
   \HomT(C,B) \to \EndT(B) \to \HomT(A,B).
  \end{align*}
  Therefore, $\ann_R \HomT(C,B) \ann_R \HomT(A,B) \subseteq \ann_R \EndT(B) $. The following lemma summarizes this discussion.
  \begin{lemma}\label{triangulatedlemma}
  Let $\TT$ be an $R$-linear triangulated category with shift functor $\Sigma$ and $X \in \TT$. For any $n \geq 1$ and $Y \in \langle X \rangle_n$, we have an inclusion $(\annT X )^n \subseteq \annT Y$. In particular, if $X$ is a strong generator with $\TT = \langle X \rangle_n$ then $(\annT X )^n$ annihilates $\TT$.
  \end{lemma}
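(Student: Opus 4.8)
The plan is to prove the inclusion $(\annT X)^n \subseteq \annT Y$ by induction on $n$, relying entirely on the three formal facts assembled in the discussion preceding the statement: that $\annT X \subseteq \ann \HomT(X,Z)$ for every $Z$, since $\HomT(X,Z)$ is a module over $\EndT X$; that $\annT$ is invariant under $\Sigma$ and converts finite sums and summands into intersections, so that $\annT X$ annihilates every object of $\langle X \rangle$; and above all the product inclusion $\ann \HomT(C,B) \cdot \ann \HomT(A,B) \subseteq \annT B$ read off from the exact sequence attached to a triangle $A \to B \to C \to \Sigma A$.

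For the base case $n = 1$, I would simply observe that $Y \in \langle X \rangle_1 = \langle X \rangle$, so the remarks before the lemma already give $(\annT X)^1 = \annT X \subseteq \annT Y$. For the inductive step, suppose the claim holds for $n-1$ and take $Y \in \langle X \rangle_n$. By definition there is a distinguished triangle $A \to Y \to C \to \Sigma A$ with $A \in \langle X \rangle_{n-1}$ and $C \in \langle X \rangle$. The induction hypothesis yields $(\annT X)^{n-1} \subseteq \annT A$, while the base case yields $\annT X \subseteq \annT C$. Chaining the module-structure inclusions $\annT A \subseteq \ann \HomT(A,Y)$ and $\annT C \subseteq \ann \HomT(C,Y)$ with the triangle inclusion applied at $B = Y$, I obtain
\[
(\annT X)^n = \annT X \cdot (\annT X)^{n-1} \subseteq \annT C \cdot \annT A \subseteq \ann \HomT(C,Y) \cdot \ann \HomT(A,Y) \subseteq \annT Y,
\]
which closes the induction. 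The last sentence of the lemma is then immediate: if $\TT = \langle X \rangle_n$ then every object lies in $\langle X \rangle_n$, so $(\annT X)^n$ annihilates $\EndT Z$ for all $Z$, i.e. it annihilates $\TT$.

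The only genuinely nonformal point, which I would verify carefully, is the product inclusion coming from the triangle; everything else is monotonicity of products of ideals. Applying the cohomological functor $\HomT(-,Y)$ to $A \to Y \to C \to \Sigma A$ gives an exact sequence $\HomT(C,Y) \xrightarrow{\phi} \EndT(Y) \to \HomT(A,Y)$. Given $s \in \ann \HomT(C,Y)$, $r \in \ann \HomT(A,Y)$ and any $g \in \EndT(Y)$, the image of $rg$ in $\HomT(A,Y)$ vanishes, so $rg = \phi(h)$ for some $h$; then $R$-linearity of $\phi$ gives $srg = s\phi(h) = \phi(sh) = 0$, and since $g$ is arbitrary we conclude $sr \in \annT Y$. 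I expect the main obstacle to be purely one of bookkeeping: keeping track of which factor of $(\annT X)^n$ is matched with $A \in \langle X \rangle_{n-1}$ and which with $C \in \langle X \rangle$, and ensuring the left-versus-right module actions on each Hom-group are used consistently.
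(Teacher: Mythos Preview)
Your proof is correct and follows exactly the same approach as the paper: the lemma is presented there as a summary of the discussion immediately preceding it, which establishes the base case via the closure properties of $\langle X \rangle$ and the inductive step via the exact sequence $\HomT(C,B) \to \EndT(B) \to \HomT(A,B)$ obtained from $\HomT(-,B)$. Your write-up simply makes the induction and the verification of the product inclusion explicit, which the paper leaves implicit.
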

  
\subsection{Stable Category of Maximal Cohen-Macaulay Modules over Gorenstein Rings.} The triangulated category we are interested in this paper is the stable category of maximal Cohen-Macaulay modules. We recall the theory of maximal Cohen-Macaulay modules over Gorenstein rings. We refer to \cite{B} for a detailed study.
  
A Noetherian ring $R$ is called (strongly) Gorenstein if it has finite injective dimension both as a left module and as a right module over itself. Now assume that $R$ is Gorenstein. An $R$-module $M$ is called maximal Cohen-Macaulay if $\Ext_R^i(M, R) = 0$ for $ i > 0$. We denote the full subcategory of maximal Cohen-Macaulay modules by $\MCM (R)$. Let $P(M,N)$ denote the submodule of all $R$-linear homomorphisms from $M$ to $N$ which factor through a projective $R$-module. The stable category of maximal Cohen-Macaulay modules, denoted $\sMCM(R)$, is the category whose objects are the same as $\MCM(R)$ and whose morphism sets are 
  \begin{align*}
      \sHom_R( - , * ) := \Hom_R( - , *) / P(-, *).
  \end{align*}
  On $\sMCM(R)$, the syzygy functor $\Omega$ has an inverse, namely the cosyzygy functor $\Omega^{-1}$. The cosyzygy functor gives $\sMCM(R)$ the structure of an $R$-linear triangulated category and we have an $R$-linear triangulated equivalence
  \begin{align*}
      \sMCM(R) \cong D^b_{sg}(R) := D^b(R) /\mathrm{perf} R
  \end{align*}
  where $D^b_{sg}(R)$ is the stabilized derived category or the singularity category obtained by taking the Verdier quotient of the bounded derived category by the subcategory of perfect complexes. For any $R$-module $M$, seen as an object in $D^b_{sg}(R)$, the maximal Cohen-Macaulay module corresponding to $M$ under this equivalence is called the maximal Cohen-Macaulay approximation (or the stabilization) of $M$ and is denoted by $M^{\st}$.
  
Let $M$ and $N$ be two (complexes of) finitely generated $R$-modules. The $n$th Tate cohomology group is defined to be 
  \begin{align*}
      \sExt_R^n(M,N):= \mathrm{Hom}_{D^b_{sg}(R)}(M, N[n]) = \sHom_R(\Omega^n M^{\st}, N^{\st})
  \end{align*}
 for any $n \in \ZZ$. Note that these groups are possibly nonzero for negative integers as well. They are sometimes called stable Ext groups because because $\sExt_R^n(M,N) = \Ext_R^n(M,N)$ if $n$ is larger than the Krull dimension of $R$. If $M, N$ are maximal Cohen-Macaulay modules, we have $\sExt_R^0(M,N) = \sHom_R(M,N)$ and also $\sExt_R^{>0}(M,N) = \Ext_R^{>0}(M,N)$
 
 \subsection{Cohomology Annihilators.} Following Iyengar and Takahashi \cite{IT}, we define the \textit{cohomology annihilator} ideal as follows:
		\begin{definition}
			The $ n $th cohomology annihilator, $ \ca^n(R) $, of a commutative Noetherian ring $ R $ is
			\begin{align*}
			\ca^n(R) = \bigcap_{M, N \in \module R} \ann_R\Ext_R^{n}(M, N)
			\end{align*}
			The cohomology annihilator, $ \ca(R) $, of $ R $ is the union
			\begin{align*}
			\ca(R) = \bigcup_{n \geq 0}\ca^n(R).
			\end{align*}
		\end{definition}
		Recall that for any two finitely generated $R$-modules $M$ and $N$, and any $n>0$ one has 
		\begin{align*}
		 \Ext_R^{n+1}(M,N) \cong \Ext_R^{n}(\Omega M,N)
		\end{align*}
		where $\Omega M$ is a syzygy of $M$. Hence, 
		\begin{align*}
		 \ann_R\Ext_R^{n+1}(M,N) = \ann_R\Ext_R^{n}(\Omega M,N)
		\end{align*}
		which implies that $\ca^{n}(R) \subseteq \ca^{n+1}(R)$. So, we have an ascending chain $\ca^0(R) \subseteq \ca^1(R) \subseteq \ca^2(R) \subseteq \ldots$. As $R$ is Noetherian, this chain stops. So, $\ca(R) = \ca^s(R)$ for some $s \gg 0$.

	 \begin{lemma}\label{mainlemma}
 	For a Gorenstein ring $R$, the cohomology annihilator ideal is the annihilator of $\sMCM(R)$.
 \end{lemma}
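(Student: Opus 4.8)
The plan is to prove the two inclusions $\ca(R) \subseteq \ann \sMCM(R)$ and $\ann \sMCM(R) \subseteq \ca(R)$ separately, the bridge between the $\Ext$ side and the stable side being the two identities recalled above: that $\sExt^n(M,N) = \sHom(\Omega^n M^{\st}, N^{\st})$ for all finitely generated $M,N$, and that $\sExt^n(M,N) = \Ext^n(M,N)$ once $n > \dim R$. Throughout I would also use the elementary observation made just before Lemma~\ref{triangulatedlemma}, namely that $\sHom(X,Y)$ is a module over $\sEnd(X)$, so that $\ann \sEnd(X) \subseteq \ann \sHom(X,Y)$ for every pair of objects. Since $R$ is Gorenstein, $d := \dim R$ is finite, and I would fix an integer $s > d$ large enough that $\ca(R) = \ca^s(R)$; this is possible because the chain $\ca^\bullet(R)$ stabilizes.

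For the inclusion $\ann \sMCM(R) \subseteq \ca(R)$, I would start from an element $r$ annihilating $\sEnd(X)$ for every $X \in \sMCM(R)$ and show $r \in \ca^n(R)$ for every $n > d$. Given arbitrary finitely generated $M, N$ and such an $n$, the identity $\Ext^n(M,N) = \sExt^n(M,N) = \sHom(\Omega^n M^{\st}, N^{\st})$ rewrites the $\Ext$ group as a stable Hom between the maximal Cohen--Macaulay objects $\Omega^n M^{\st}$ and $N^{\st}$. Because $r$ annihilates $\sEnd(\Omega^n M^{\st})$ and $\sHom(\Omega^n M^{\st}, N^{\st})$ is a module over this ring, $r$ annihilates $\Ext^n(M,N)$. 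As $M, N$ were arbitrary, this gives $r \in \ca^n(R) \subseteq \ca(R)$.

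For the reverse inclusion $\ca(R) \subseteq \ann \sMCM(R)$, I would take $r \in \ca(R) = \ca^s(R)$, so that $r$ kills $\Ext^s(M,N)$ for all $M,N$, and aim to show $r$ annihilates $\sEnd(X) = \sHom(X,X)$ for each maximal Cohen--Macaulay module $X$. Since the syzygy functor $\Omega$ is an autoequivalence of $\sMCM(R)$, it induces an $R$-linear isomorphism $\sHom(X,X) \cong \sHom(\Omega^s X, \Omega^s X) = \sExt^s(X, \Omega^s X)$, where the last equality is the defining formula applied with second argument $\Omega^s X$. As $s > d$, the right-hand side equals $\Ext^s(X, \Omega^s X)$, which $r$ annihilates by hypothesis. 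Hence $r$ annihilates $\sEnd(X)$ for every $X$, that is, $r \in \ann \sMCM(R)$.

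I expect the main point to watch is not a single hard computation but the passage from maximal Cohen--Macaulay modules to arbitrary finitely generated ones: the cohomology annihilator is defined through $\Ext$ of \emph{all} modules, whereas $\ann \sMCM(R)$ only sees stable endomorphisms of MCM objects. The identity $\sExt^n = \Ext^n$ for $n > \dim R$, which routes every high $\Ext$ group through the maximal Cohen--Macaulay approximations $M^{\st}$ and $N^{\st}$, is exactly what closes this gap; and choosing $s$ beyond both $\dim R$ and the stabilization index of $\ca^\bullet(R)$ is what lets both inclusions be verified in a common degree.
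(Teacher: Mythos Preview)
Your proposal is correct and follows essentially the same route as the paper: both arguments pick $s$ larger than $\dim R$ and the stabilization index, use $\Ext^s = \sExt^s$ to pass through the MCM approximations, and then exploit that $\Omega$ is an autoequivalence together with the observation that $\ann\sEnd(X) \subseteq \ann\sHom(X,Y)$. The only cosmetic difference is that the paper presents the argument as a single chain of equalities of intersections, whereas you split it into two inclusions and, for $\ca(R) \subseteq \ann\sMCM(R)$, isolate the particular isomorphism $\sEnd(X) \cong \Ext^s(X,\Omega^s X)$.
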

\begin{proof}Recall that $ \ca(R) = \ca^s(R) $ for some large $ s $. Picking $s$ larger than the Krull dimension, we have
\begin{align*}
\ca(R) &= \bigcap_{M, N \in \module R} \ann_R\Ext_R^{s}(M, N) = \bigcap_{M, N \in \module R} \ann_R\sExt_R^{s}(M, N)
\\&= \bigcap_{M, N \in \module R} \ann_R\sExt_R^{s}(M^{\st}, N^{\st})= \bigcap_{M, N \in \MCM R} \ann_R\sExt_R^{s}(M, N)
\\&= \bigcap_{M, N \in \MCM R} \ann_R\sExt_R^{0}(\Omega^s M, N)
\\&= \bigcap_{M, N \in \MCM R} \ann_R\sHom_R(\Omega^s M, N)
\end{align*}
If one of $ \Omega^s M $ and $ N $ is projective, then the $ \sHom_R $ group is zero and therefore the annihilator of that $ \sHom_R $ group is the entire ring. So, we can focus on the non-projective maximal Cohen-Macaulay modules. Moreover, since $ \Omega $ is an autoequivalence of $ \sMCM(R) $, we can write the cohomology annihilator as
\begin{align*}
\ca(R) = \bigcap_{M, N \in \sMCM R} \ann_R\sHom_R(M, N)
\end{align*}
If $r$ annihilates the stable endomorphism ring $\sEnd_R(M)$ for a maximal Cohen-Macaulay module $M$, then it annihilates $\sHom_R(M,N)$ for any $N \in \MCM (R)$. This finishes the proof.
\end{proof}
Recall that for any commutative ring $A$ and any $A$-module $X$, one has $\mathrm{ann}_A(X) = \mathrm{ann}_A \mathrm{End}_A(X)$. Hence, for ease of notation, we make the following definition.

\begin{definition}
 Let $R$ be a commutative Gorenstein ring. For any $M \in \sMCM(R)$, the stable annihilator of $M$ is
 \begin{align*}
  \sann_R(M) := \ann_R \sEnd_R(M) = \ann_R \mathrm{End}_{\sMCM(R)}(M).
 \end{align*}
\end{definition}

\begin{remark}{\label{dualremark}}
 Let $M$ be a maximal Cohen-Macaulay module. Let $P(M)$ and $\CC\RR(M)$ be a projective resolution and a complete resolution of $M$, respectively. It is sometimes useful to compute the stable annihilator of $M$ in terms of $P(M)$ and $\CC\RR(M)$. We note that an element $r \in \sann_R(M)$ if and only if the multiplication by $r$ 
\begin{enumerate}
 \item on $M$ factors through a projective module, or
 \item on $P(M)$ factors through a perfect complex, or
 \item on $\CC\RR(M)$ is nullhomotopic.
\end{enumerate}
For instance, the equality $\sann_R(M) = \sann_R(\Omega M)$ amounts to observing that if $\CC\RR(M)[1] = \CC\RR(\Omega M)$ and multiplication by $r$ is nullhomotopic on $\CC\RR(M)$ if and only if it is nullhomotopic on $\CC\RR(M)[1]$. One can also easily see that $\sann_R(M) = \sann_R(M^*)$ where $M^* = \Hom_R(M,R)$, by dualizing the commutative diagram \begin{align*}
	\xymatrix{
		M \ar[dr]_{\alpha} \ar[rr]^{r} && M\\
		& F \ar[ur]_{\beta}
	}
	\end{align*}
with $F$ being a free module. 
\end{remark}

\section{Examples}
In this section, we present examples of cohomology annihilator computations.
 
 \begin{remark}\label{twoperiodicityremark}
 If $R$ is a hypersurface singularity and $M$ is a maximal Cohen-Macaulay module, then any minimal complete resolution of $M$ is 2-periodic. Therefore, $\sEnd_R(M) \cong \sExt_R^2(M,M) \cong \Ext_R^2(M,M)$ \cite{Eis}. This means that the stable annihilator of $M$ is equal to $\ann_R\Ext_R^2(M,M)$.
 
 More importantly, the 2-periodicity tells us that we can compute the stable annihilators using matrix factorizations. In particular, let $R = k[\![x_1, \ldots, x_m]\!]/(f)$ and $A$ and $B$ be two $n \times n$ matrices such that $AB = BA = 0$ and $\coker(A) = M$. Then, $r$ stably annihilates $M$ if and only if there are matrices $C$ and $D$ such that the following diagram commutes. 
 
 \begin{align*}
 	\xymatrix{
 		R^n \ar[d]^r \ar[r]^{A} & R^n \ar[d]^r  \ar[r]^{B} \ar@{..>}[dl]^C & R^n \ar[d]^r  \ar@{..>}[dl]^D \\
 		R^n \ar[r]^{A} & R^n \ar[r]^{B} & R^n 
 	}
 	\end{align*}
 This description is not only useful in computations but also it allows to prove the Jacobian ideal is contained in the cohomology annihilator. By using this description of annihilation, it follows from the product rule for derivatives that $\partial f / \partial x_i$ is contained in the cohomology annihilator for all $i$.
 
\end{remark}

\begin{example}\label{onedimensionalexample}
	Let $ R = k[\![ x,y ]\!]/(f) $ be a one dimensional local ring of finite representation type where $k$ is an algebraically closed field of characteristic zero. Then, a complete list of indecomposable maximal Cohen-Macaulay modules is given in \cite[Ch. 9]{Y} in terms of matrix factorizations. We can compute the cohomology annihilator in Singular. We illustrate this on the $A_n$ case. The following Singular code computes $\sann_R \Omega \coker A$ where 
 	\begin{align*}
 	A = A_j = \begin{bmatrix}
 	x & y^j \\ y^{n+1-j} & -x
 	\end{bmatrix}.
 	\end{align*}
 
 	\begin{verbatim}
 		LIB ``homolog.lib'';
 		LIB ``primdec.lib'';
 		ring S = 0, (x,y), ds;
 		int n = n ; \\write your n here
 		ideal I = x2 + y^(n+1);
 		qring R = std(I);
 		int j = j ; \\write your j here
 		int k = n+1 - j;
 		matrix A[2][2] = x, yj , yk, -x;
 		print(std(Ann(Ext(2,A,A))));
 	\end{verbatim}
 	
Cokernels of these matrices for $j = 0, \ldots, n+1$ give a complete list of indecomposable maximal Cohen-Macaulay modules. Computing their stable annihilators with this code, we see that the cohomology annihilator for the $ A_{n} $ case is the ideal $ (x, y^{\left\lfloor n \rfloor\right/2}) $.
 \end{example}

 	\begin{example}\label{twodimensionalexample}
 		Let $G$ be a finite subgroup of $SL(2,k)$ where $k$ is an algebraically closed field of characteristic zero. The group $G$ acts on $S = k[\![x,y]\!]$ linearly and one can consider the invariant ring
 		\begin{align*}
 		R = S^G = \{ r \in S : g(r) = r \text{ for all } g \in G \}.
 		\end{align*}
 		Then, $R$ is Gorenstein and in fact isomorphic to a two dimensional ADE singularity. In particular, $\dim S = \dim R = 2$ and thus $S \in \MCM(R)$. Moreover, $\mathrm{add} S = \MCM(R)$ see \cite[Proposition 10.5]{Y}. Hence, by Lemma \ref{triangulatedlemma} we deduce that 
 		\begin{align*}
 		\ca(R) = \sann_R (S).
 		\end{align*}
 	\end{example}
 	
 	\begin{example}\label{determinantalex}
 	    Let $k$ be a field and $X = (x_{ij})$ be the generic $n \times n$ square matrix. Consider the $n^2 - 1$ dimensional hypersurface ring $R = k[X]/\det(X)$. Let $M = \coker X$. Then, $M \in \MCM(R)$. We have an isomorphism $\sEnd_R(M) \cong R/ J_R$ \cite[Corollary 5.6.]{BL}. Hence, we conclude that $\sann_R(M) = J_R$. Thus, $\ca(R) \subseteq J_R$. We already know that the Jacobian ideal is contained in $\ca(R)$ so we have the equality $\ca(R) = J_R$. Here, $J_R$ is equal to the ideal generated by the submaximal minors of $X$ \cite[Lemma 2.6.]{BL}.
 	\end{example}
 	
 	\begin{example}\label{torusinvariant}
 	    In this example, we compute the cohomology annihilator of the one dimensional torus invariant ring with weights $[2,1,-2,-1]$. See \cite[Chapter 16]{Y} and \cite[Section 8]{vdb} for reference. Let $k$ be an algebraically closed field of characteristic zero. Suppose that $T = k^*$ acts on a four dimensional vector space spanned by $x_1, x_2, x_3, x_4$ with weights $a_1 = 2, a_2 = 1, a_3 = -2, a_4 = -1$. That is, for any $t \in T$, $t \cdot x_i = t^{a_i} x_i$ and this action extends linearly. This action extends to an action on the polynomial ring $S = k[x_1, \ldots, x_n]$. Let $R = S^T$ be the invariant ring. As the weights add up to zero, $R$ is Gorenstein.
 	    
Assigning $\deg x_i = a_i$, we can make $S$ into a $\ZZ$-graded ring. Let $S_c$ be the degree $c$ part. Clearly, $S_0 = R$ and every $S_c$ has an $R$-module structure. Moreover, $S_1$ and $S_2$ are maximal Cohen-Macaulay modules over $R$. Hence, $\ca(R) \subseteq \sann_R S_1 \cap \sann_R S_2$.
 	  
Note that for any $-2 \leq a,b \leq 2$, we have 
 	    \begin{align*}
 	        \Hom_R(S_a, S_b) \cong S_{b-a}.
 	    \end{align*}
 	    In particular, $S_a^* = \Hom_R(S_a, S_0) \cong S_{-a}$ and $\End_R(S_a) = R$. Hence, via the exact sequence
 	    \begin{align*}
 	        S_a^* \otimes_R S_a \to \End_R(S_a) \to \sEnd_R(S_a) \to 0 
 	    \end{align*}
 	    one can see the stable annihilator of $S_a$ is equal to the image of the multiplication map $S_{-a} \otimes_R S_a \to R$. It is easy to show that 
 	    \begin{align*}
 	        S_0 & = R = k[x_1x_3, x_1x_4^2, x_2^2x_3, x_3x_4]\\
 	        S_{-1} & = R x_4 + R x_2x_3\\
 	        S_1 & = Rx_2 + Rx_1x_4 \\
 	        S_{-2} & = Rx_3 + Rx_4^2 \\
 	        S_{2} & = Rx_1 + Rx_2^2
 	    \end{align*}
 	    and so
 	    \begin{align*}
 	        \sann_R S_1 &= S_{-1}S_1 = (x_2x_4, x_2^2x_3, x_1x_4^2, x_1x_2x_3x_4) \\
 	        \sann_R S_2 & = S_{-2}S_2 = (x_1x_3, x_1x_4^2, x_2^2x_3, x_2^2x_4^2).
 	    \end{align*}
 	    Next, notice that we have a $k$-algebra isomorphism
 	    \begin{align*}
 	        R = k[x_1x_3, x_1x_4^2, x_2^2x_3, x_3x_4] \cong \frac{k[x,y,z,w]}{(xw^2 - yz)} = P.
 	    \end{align*}
 	    Under this isomorphism, we have the identifications
 	    \begin{align*}
 	        \sann_R S_1 & \leftrightarrow (w,z,y) \\
 	        \sann_R S_2 & \leftrightarrow (x,y,z,w^2)
 	    \end{align*}
 	    and hence
 	    \begin{align*}
 	        \sann_R S_1 \cap \sann_R S_2 \leftrightarrow (w,z,y) \cap (x,y,z,w^2) = (xw, y, z, w^2) =J_P
 	    \end{align*}
 	    where $J_P$ denotes the Jacobian ideal of $P$ generated by the partial derivatives of $xw^2 -yz$. We conclude that $\ca(R)$ is contained in the Jacobian ideal $J_R$ of $R$. We also know that the Jacobian ideal is contained in the cohomology annihilator. So, we conclude that
 	     \begin{align*}
 	         \ca(R) = \sann_R S_1 \cap \sann_R S_2 = J_R.
 	     \end{align*}
 	\end{example}

\section{One Dimensional Gorenstein Rings}
 In this section, we describe the cohomology annihilator of one dimensional Gorenstein rings. The main result is that under reasonable assumptions, the cohomology annihilator is equal to the conductor ideal.
 
 \subsection{Conductor Ideals}
 
Let $R$ be a commutative Noetherian ring. The \textit{normalization} of $R$, denoted by $\nrml{R}$, is the integral closure of $R$ inside its total quotient ring. The \textit{conductor} ideal of $R$, denoted by $\co(R)$, is the ideal 
 \begin{align*}
		\{r \in R \; : \; rq \in R \text{ for all } q \in \nrml{R}\} = \ann_R\left( \nrml{R}/R \right).
 \end{align*}
 We will use the following well-known facts about the conductor ideal. References for these results include \cite{SH, GLS, O}.
 
 \begin{enumerate}
     \item The normalization $\nrml{R}$ is (module-)finite over $R$ if and only if $\co(R)$ contains a nonzerodivisor.
     \item In this case, we have
     \begin{align*}
         \End_R(\nrml{R}) \cong \nrml{R}
     \end{align*}
     and
     \begin{align*}
         \Hom_R(\nrml{R}, R) \cong \co(R) \quad .
     \end{align*}
     \item A local ring has finite normalization if its completion is reduced. If the ring is Cohen-Macaulay of dimension 1, then the converse is also true \cite[Proposition 4.6.]{LW}
     \item The conductor is also an ideal of $\nrml{R}$. In fact, it is the largest common ideal of $R$ and $\nrml{R}$.
 \end{enumerate}
 
 \begin{proposition}
		Suppose that $R$ has finite normalization. Then, as $ R $-modules we have
		\begin{align*}
		\sEnd_R(\nrml{R}) \cong \frac{\nrml{R}}{\co(R)}.
		\end{align*}
 \end{proposition}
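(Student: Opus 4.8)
The plan is to realize the stable endomorphism ring as the quotient $\End(\nrml{R})/P(\nrml{R},\nrml{R})$ and then identify each piece with an explicit $R$-submodule of $\nrml{R}$. First I would observe that $\nrml{R}$ is a maximal Cohen-Macaulay $R$-module: it is finitely generated by hypothesis and torsion-free, since it sits inside the total quotient ring of $R$, and a finitely generated torsion-free module over a one-dimensional Gorenstein ring is MCM. Hence $\sEnd(\nrml{R})$ is defined and equals $\End(\nrml{R})/P(\nrml{R},\nrml{R})$, where $P(\nrml{R},\nrml{R})$ denotes the endomorphisms factoring through a projective (equivalently, free) module.

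The key computational step is to describe $P(\nrml{R},\nrml{R})$ concretely. I would invoke the standard fact that, for finitely generated modules, an $R$-homomorphism $M\to N$ factors through a free module if and only if it lies in the image of the evaluation map
\begin{align*}
\nu\colon \Hom(M,R)\otimes_R N \longrightarrow \Hom(M,N),\qquad f\otimes n\longmapsto \bigl(m\mapsto f(m)\,n\bigr).
\end{align*}
Taking $M=N=\nrml{R}$, this identifies $P(\nrml{R},\nrml{R})$ with the image of $\nu$.

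Next I would feed in the two conductor facts recalled above: $\End(\nrml{R})\cong\nrml{R}$ via $\phi\mapsto\phi(1)$, and $\Hom(\nrml{R},R)\cong\co(R)$, again via evaluation at $1$. Under these identifications a homomorphism $\phi\in\Hom(\nrml{R},R)$ is multiplication by an element $c=\phi(1)\in\co(R)$, so $\nu(\phi\otimes q)$ becomes multiplication by $cq\in\nrml{R}$. Therefore the image of $\nu$, viewed inside $\End(\nrml{R})\cong\nrml{R}$, is the product $\co(R)\cdot\nrml{R}$. Since $\co(R)$ is not merely an ideal of $R$ but an ideal of $\nrml{R}$ (indeed the largest common ideal of $R$ and $\nrml{R}$), we have $\co(R)\cdot\nrml{R}=\co(R)$, so $P(\nrml{R},\nrml{R})$ corresponds to $\co(R)$ as a submodule of $\nrml{R}$. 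Passing to the quotient then yields $\sEnd(\nrml{R})\cong\nrml{R}/\co(R)$.

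The main thing to check carefully -- and essentially the only place where something could slip -- is the compatibility of the two conductor isomorphisms with the pairing $\nu$: one must verify that the identification $\End(\nrml{R})\cong\nrml{R}$ sends the composite built from $\phi$ and multiplication by $q$ to the element $cq$, so that the image of $\nu$ matches the ideal product $\co(R)\cdot\nrml{R}$ exactly. This is where the ring structure of $\nrml{R}$ enters: both isomorphisms are ``evaluation at $1$,'' and they interact correctly precisely because multiplication in $\nrml{R}$ is commutative and $R$-bilinear. The final collapse $\co(R)\cdot\nrml{R}=\co(R)$ is then immediate from $\co(R)$ being an $\nrml{R}$-ideal.
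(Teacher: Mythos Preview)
Your argument is correct and follows essentially the same route as the paper: both proofs use the exact sequence
\[
\Hom(\nrml{R},R)\otimes_R \nrml{R}\xrightarrow{\nu}\End(\nrml{R})\to\sEnd(\nrml{R})\to 0,
\]
identify the first two terms with $\co(R)\otimes_R\nrml{R}$ and $\nrml{R}$, and then observe that the image of $\nu$ is exactly $\co(R)$ because the conductor is an $\nrml{R}$-ideal. Your additional verification that $\nrml{R}$ is MCM and your explicit tracking of the evaluation-at-$1$ isomorphisms are not needed for the proposition itself (the exact sequence holds for arbitrary finitely generated modules), but they do no harm.
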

 
 \begin{proof}
     For any two finitely generated $R$-modules $M$ and $N$, we have the following exact sequence
     \begin{align*}
         \Hom_R(M,R) \otimes_R N \xrightarrow{\nu} \Hom_R(M,N) \to \sHom_R(M,N) \to 0
     \end{align*}
     where $\nu(\phi \otimes n)$ with $\phi \in \Hom_R(M,R)$ and $n \in N$ is the map which takes $m \in M$ to $\phi(m)n$. Picking $M = N = \nrml{R}$, we then get
     \begin{align*}
         \Hom_R(\nrml{R},R) \otimes_R \nrml{R} \xrightarrow{\nu} \End_R(\nrml{R}) \to \sEnd_R(\nrml{R}) \to 0
     \end{align*}
     which gives
     \begin{align*}
         \co(R) \otimes \nrml{R} \xrightarrow{\mu} \nrml{R} \to \sEnd_R(\nrml{R}) \to 0
     \end{align*}
     where $\mu$ is the multiplication map. As the conductor is an ideal of the normalization, we get that the image of $\mu$ lies in the conductor. Moreover, $\co(R) = \mu(\co(R) \otimes_R 1) $ is in the image of $\mu$. Hence, we have that the image of $\mu$ is equal to the conductor which finishes the proof.
 \end{proof}
 
	\begin{corollary}\label{firstcorollary}
		Suppose that $R$ has finite normalization. Then, we have
		\begin{align*}
		\ann_R(\sEnd_R(\nrml{R})) = \co(R).
		\end{align*}
	\end{corollary}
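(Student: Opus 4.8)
The plan is to derive the corollary directly from the preceding Proposition, which furnishes an $R$-module isomorphism $\sEnd(\nrml{R}) \cong \nrml{R}/\co(R)$. Since the annihilator of a module depends only on its underlying $R$-module structure, it suffices to establish the purely ideal-theoretic identity
\begin{align*}
\ann\left(\nrml{R}/\co(R)\right) = \co(R).
\end{align*}
Unwinding the definition, the left-hand side is the set of $r \in R$ such that $rq \in \co(R)$ for every $q \in \nrml{R}$; equivalently, such that $r\,\nrml{R} \subseteq \co(R)$.

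Next I would verify the two inclusions separately. For $\co(R) \subseteq \ann(\nrml{R}/\co(R))$, I would invoke the fourth listed property of the conductor, namely that $\co(R)$ is an ideal of $\nrml{R}$, so that $\nrml{R}\cdot\co(R) \subseteq \co(R)$; in particular every $r \in \co(R)$ satisfies $r\,\nrml{R} \subseteq \co(R)$. For the reverse inclusion, suppose $r \in R$ satisfies $r\,\nrml{R} \subseteq \co(R)$. Evaluating this containment at $1 \in \nrml{R}$ gives $r = r\cdot 1 \in \co(R)$, which yields $\ann(\nrml{R}/\co(R)) \subseteq \co(R)$ and completes the computation.

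There is no substantial obstacle here: once the Proposition is in hand the corollary is a short formal consequence, and the only external input is the fact that $\co(R)$ is an ideal of $\nrml{R}$ (equivalently, its characterization as the largest common ideal of $R$ and $\nrml{R}$). The hypothesis that $R$ has finite normalization enters only as the standing assumption of the Proposition; it guarantees, via the first listed property, that $\co(R)$ contains a nonzerodivisor, but that refinement is not needed for the annihilator computation itself.
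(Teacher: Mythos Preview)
Your proposal is correct and follows essentially the same approach as the paper: both invoke the preceding Proposition and then compute $\ann(\nrml{R}/\co(R))=\co(R)$ using that $\co(R)$ is an ideal of $\nrml{R}$. The paper's proof is simply more compressed, writing the computation as the chain $\ann(\sEnd(\nrml{R}))=\ann(\nrml{R}/\co(R))=\ann(\nrml{R}/R)=\co(R)$, where the middle equality tacitly uses the same $\nrml{R}$-ideal property you spell out, and the last equality is the definition of the conductor.
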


	\begin{proof}We have
		\begin{align*}
		\ann_R(\sEnd_R(\nrml{R})) = \ann_R\left( \frac{\nrml{R}}{\co(R)} \right) = \ann_R\left( \nrml{R} / R\right) = \co(R).
		\end{align*}
	\end{proof}
 Suppose that $ R $ is a one dimensional Gorenstein local ring with finite normalization. Then, $ \nrml{R} $ is a one dimensional regular ring. Moreover, we have $ \nrml{R} $ is maximal Cohen-Macaulay as an $ R $-module as it is torsion-free over $R$. Therefore, Lemma \ref{mainlemma} gives
	\begin{align*}
	\ca(R) \subseteq \ann_R\sEnd_R(\nrml{R}) = \co(R).
	\end{align*}
	
Let $ R $ be a one dimensional reduced complete Noetherian local ring. Then, $ \co(R) $ annihilates $ \Ext_R^1(M,N) $ for any $ M \in \MCM(R) $ and $ N \in \module(R) $\cite[Corollary 3.2]{W}. In particular, we have that 
	\begin{align*}
	\co(R) \subseteq \ann_R \Ext_R^1(\Omega^{-1}M, M) = \ann_R \sExt_R^1(\Omega^{-1}M, M) = \ann_R \sEnd_R(M)
	\end{align*}
	for all $ M\in \MCM(R) $. For us, this means $ \co(R) \subseteq \ca(R) $. Therefore, we have the following result.
	\begin{theorem}\label{maintheorem}
		Let $ R $ be a one dimensional reduced complete Gorenstein local ring. Then,
		\begin{align*}
		\ca(R) = \co(R).
		\end{align*}
	\end{theorem}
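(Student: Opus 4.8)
The plan is to prove the two inclusions $\ca(R) \subseteq \co(R)$ and $\co(R) \subseteq \ca(R)$ separately, assembling the ingredients prepared above.

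For $\ca(R) \subseteq \co(R)$ I would first check that the hypotheses supply the finiteness needed for the conductor machinery: since $R$ is complete and reduced, its completion is reduced, so by the facts recalled above $R$ has finite normalization. As $R$ is Gorenstein of dimension one, the normalization $\nrml{R}$ is then a regular one dimensional ring and is maximal Cohen-Macaulay as an $R$-module, hence an object of $\sMCM(R)$. By Lemma~\ref{mainlemma} the ideal $\ca(R)$ is the annihilator of the entire category $\sMCM(R)$, so in particular $\ca(R) \subseteq \sann(\nrml{R}) = \ann\sEnd(\nrml{R})$; Corollary~\ref{firstcorollary} identifies this last ideal with $\co(R)$, giving the inclusion.

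For the reverse inclusion $\co(R) \subseteq \ca(R)$ I would invoke Wang's theorem. By Lemma~\ref{mainlemma} it suffices to check that $\co(R) \subseteq \sann(M) = \ann\sEnd(M)$ for every $M \in \MCM(R)$. Fixing such an $M$ and setting $N = \Omega^{-1}M$, which is again maximal Cohen-Macaulay because $\Omega$ is an autoequivalence of $\sMCM(R)$, Wang's theorem applied to the pair $N, M$ shows that $\co(R)$ annihilates $\Ext^1(N,M)$. The chain of identifications $\Ext^1(\Omega^{-1}M,M) = \sExt^1(\Omega^{-1}M,M) = \sEnd(M)$ noted above then turns this into $\co(R) \subseteq \ann\sEnd(M)$, and letting $M$ vary yields $\co(R) \subseteq \ca(R)$. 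Combining the two inclusions gives $\ca(R) = \co(R)$.

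The substantive content is split between the two directions, each of which is essentially settled by the preceding material, so the theorem itself is a short synthesis. I expect the main obstacle to lie on the reverse side, namely Wang's theorem, which is the deep external input and is not reproved here; on the forward side the only delicate point is verifying that the reduced, complete, Gorenstein, one dimensional hypotheses act together to guarantee simultaneously that $R$ has finite normalization, that $\nrml{R}$ is a maximal Cohen-Macaulay $R$-module, and that Wang's theorem applies.
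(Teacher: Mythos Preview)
Your proposal is correct and follows essentially the same route as the paper: the inclusion $\ca(R)\subseteq\co(R)$ is obtained exactly by viewing $\nrml{R}$ as an object of $\sMCM(R)$ and invoking Corollary~\ref{firstcorollary}, and the reverse inclusion is deduced from Wang's theorem via the identification $\Ext^1(\Omega^{-1}M,M)=\sExt^1(\Omega^{-1}M,M)=\sEnd(M)$. The paper presents these two paragraphs immediately before the theorem statement rather than after it, but the argument is the same.
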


  \subsection{Buchweitz's Observation Explained}As stated in the introduction, the starting point of this project was an observation due to Ragnar-Olaf Buchweitz on the relation between codimensions of the Jacobian ideal and the cohomology annihilator ideal.
  
  Let $R$ be a equicharacteristic complete Noetherian local ring with coefficient field $k$. Then, by Cohen's Structure Theorem $R$ is isomorphic to a ring of the form $k[\![ x_1, \ldots, x_n]\!]/(f_1, \ldots, f_m)$. The ideal generated by all maximal minors of the Jacobian matrix - whose entries are given by the partial derivatives $\partial f_j/ \partial x_i$ - is called the Jacobian ideal $J_R$ of $R$. If we assume that $f_1, \ldots, f_m$ form a regular sequence so that $R$ is a complete intersection ring, then $J_R$ is contained in the cohomology annihilator $\ca(R)$ \cite[Corollary 7.8.7.]{B}. For the case $m = 1$, that is the hypersurface case, this follows from Remark \ref{twoperiodicityremark}. In \cite{ITJac}, authors define the Jacobian ideal for a larger family of rings and they prove that the Jacobian ideal is still contained in the cohomology annihilator in this case.
  
   From now on, let us assume that $R = k[\![x,y]\!]/(f)$ is the coordinate ring of a hypersurface singularity. We have the containments
  \begin{align*}
   J_R \subseteq \ca(R) \subseteq R.
  \end{align*}
  Buchweitz observed that if $f$ is one of the polynomials $x^2 + y^{n+1}, x^3 + y^4$ or $ x^3+ y^5$, then one has
  \begin{align*}
   \dim_k \frac{R}{J_R} = 2 \dim_k \frac{R}{\ca(R)}.
  \end{align*}
  We can add more examples, using Theorem \ref{maintheorem}.
  
  \begin{example}
  Let $f = x^a + y^b$ where $a,b > 1$ are coprime so that the semigroup generated by $a$ and $b$ contains all but finitely many nonnegative integers. In particular, the Frobenius number - the last number which does not appear in the semigroup - is given by $(a-1)(b-1) - 1$. Moreover, since $R$ is Gorenstein, so is $T = k[\![t^a, t^b]\!]$, which implies the semigroup generated by $a$ and $b$ is symmetric. That is, if two numbers add up to $(a-1)(b-1) - 1$, then exactly one of them is in the semigroup. Therefore, exactly half of the numbers between $0$ and $(a-1)(b-1) - 1$ are in the semigroup. Since the conductor ideal $\co(T)$ of $T$ contains every power of $t$ larger than $(a-1)(b-1) - 1$, one can conclude
  \begin{align*}
    \dim_k \frac{R}{\ca(R)} = \dim_k \frac{T}{\co(T)} = \frac{(a-1)(b-1)}{2}.
  \end{align*}
  On the other hand, the Jacobian ideal $J_R$ is generated by the two elements $x^{a-1}$ and $y^{b-1}$ assuming that the characteristic of $k$ divides neither $a$ nor $b$. Thus,
  \begin{align*}
   \dim_k \frac{R}{J_R} = \dim_k \frac{k[\![x,y]\!]}{(f, x^{a-1}, y^{b-1})} = \dim_k \left( \frac{ k[\![x]\!] }{ (x^{a-1})} \otimes_k \frac{ k[\![y]\!] }{ (y^{b-1})} \right) = (a-1)(b-1)
  \end{align*}
  
  Combining the two equalities, we extend Buchweitz's observation to rings of the form $R= k[\![x,y]\!]/(x^a + y^b)$ where $a$ and $b$ are coprime.
  \end{example}
  \begin{remark} We can extend this observation even further. Suppose that $R = k[\![x,y]\!]/(f)$ is the coordinate ring of a reduced curve singularity. Then, the common number
  \begin{align*}
   \dim_k \frac{\nrml{R}}{R} = \dim_k \frac{R}{\co(R)}
  \end{align*}
  is called the \textit{delta invariant} of the curve - which we denote by $\delta_R$. By Theorem \ref{maintheorem}, we have
  \begin{align*}
   \delta_R = \dim_k \frac{R}{\ca(R)}.
  \end{align*}
  On the other hand, the number
  \begin{align*}
   \mu_R = \dim_k \frac{R}{J_R}
  \end{align*}
  is called the \textit{Milnor number} of $R$. By letting $r$ be the number of branches of the curve at the unique singular point, namely the origin, we have the Milnor-Jung Formula
  \begin{align*}
   \mu_R = 2\delta_R - r + 1.
  \end{align*}
  This formula returns Buchweitz's observation
  \begin{align*}
   \dim_k \frac{R}{J_R} = \mu = 2 \delta =  2 \dim_k \frac{R}{\ca(R)}
  \end{align*}
  for unibranch $(r = 1)$ singularities. Indeed, the singularities from Example \ref{onedimensionalexample} are all unibranch. See \cite[Section 4.2.4.]{Dol} and \cite{Hun} for references.
 \end{remark}

  \section{Cohomology Annihilators and Weak MCM Extending Property}
  Let $f: R \to S$ be a ring homomorphism of Gorenstein rings. We say that \textit{$f$ has the weak MCM extending property} if for any $N \in \MCM(S)$, there is an $M \in \MCM(R)$ and $N' \in \MCM(S)$ such that $N'$ is a direct summand of $M \otimes_R S$ and $N \in \langle N' \rangle$. The purpose of this section is to study the behaviour of cohomology annihilator ideal under maps with the weak MCM extending property.
  
  \begin{proposition}\label{weaklifting}
   Let $f : R \to S$ be a ring homomorphism of Gorenstein rings with the weak MCM extending property. Then, 
   \begin{align*}
       f(\ca(R)) \subseteq \ca(S).
   \end{align*}
  \end{proposition}
  \begin{proof}
      Let $r \in \ca(R)$ and $N \in \MCM(S)$. By the weak MCM extending property, there is an $M \in \MCM(R)$ and $N' \in \MCM(S) $ such that $N'$ is a direct summand of $M \otimes_R S$ and $N \in \langle N' \rangle$. We have a commutative diagram
      \begin{align*}
	\xymatrix{
		M \ar[dr]_{\alpha} \ar[rr]^{r} && M\\
		& F \ar[ur]_{\beta}
	}
	\end{align*}
	with a free $R$-module $F$. Tensoring this commutative diagram with $S$, we get 
	\begin{align*}
	\xymatrix{
	N' \ar@{^{(}->}[r] &	M \otimes_R S \ar[dr]_{\alpha \otimes_R 1} \ar[rr]^{f(r)} && M \otimes_R S \ar@{->>}[r] &  N'\\
	&	& F \otimes_R S \ar[ur]_{\beta \otimes_R 1}
	}
	\end{align*}
	Note that the composition of three maps on the first row is still multiplication by $f(r)$ and $F \otimes_R S$ is a free $S$-module. Thus, $f(r)$ stably annihilates $N'$. By Lemma \ref{triangulatedlemma}, we deduce $f(r)$ stably annihilates $N$. As $N$ was arbitrary, we get the result.
  \end{proof}
  
  We present three applications of this proposition.
   
   \subsection{Milnor-Jung Type Formula For Branched Covers.} Let $S = k[\![x_0, \ldots, x_n ]\!]$ and $f \in ( x_0, \ldots, x_n )$. Let $S^\sharp = S [\![ y ]\!]$. Then, $R^\sharp = S^\sharp/(f + y^m)$ is called the $m$-branched cover of $R = S/(f)$.
   
   Note that $R \cong R^\sharp / (y)$ so that every $R$-module $M$ has an $R^\sharp$-module structure. Let $\pi: R^\sharp \to R$ be the natural surjection. For an $R^\sharp$-module $N$, we denote by $\pi N$ the $R$-module $N/yN$. We denote by $\Omega^\sharp$ the syzygy operator on $R^\sharp$. We have
   \begin{itemize}
    \item if $M \in \MCM(R)$, then $\Omega^\sharp M \in \MCM(R^\sharp)$ and,
    \item if $N \in \MCM(R^\sharp)$, then $\pi N \in \MCM(R)$
   \end{itemize}
   by keeping track of the depth of each module. The main ingredient of this subsection is the following lemma due to Kn{\"o}rrer \cite[Chapter 8.3.]{LW}, \cite[Chapter 12]{Y}.
   \begin{lemma}{\cite{K}}\label{Knorrerlemma}
    Let $m = 2$ so that $R^\sharp$ is the double-branched cover of $R$. Let $M \in \MCM(R)$ and $N \in \MCM(R^\sharp)$. Also assume that the characteristic of  $k$ is not $2$. Then,
    \begin{enumerate}
     \item $\pi\Omega^\sharp M \cong M \oplus \Omega M$ as $R$-modules, and
     \item $\Omega^\sharp \pi N \cong N \oplus \Omega^\sharp N$ as $R^\sharp$-modules.
    \end{enumerate}
   \end{lemma}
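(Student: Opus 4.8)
The plan is to translate everything into matrix factorizations, which is legitimate because both $R$ and $R^\sharp$ are hypersurface rings, so by Remark \ref{twoperiodicityremark} every maximal Cohen--Macaulay module is the cokernel of one half of a two-periodic matrix factorization. Fix a reduced matrix factorization $(\phi,\psi)$ of $f$ over $S$ with $\phi\psi=\psi\phi=f\,I$ and $M=\coker\phi$, and view all such matrices over $S^\sharp=S[\![y]\!]$ by letting their entries not involve $y$. I would compute the two syzygies directly and then reduce modulo $y$, recording throughout that a presentation is minimal when all its entries lie in the maximal ideal (so that the resulting module really is $\Omega^\sharp$ of the one presented).

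For (1): since $y$ annihilates $M$, the module $M$ is \emph{not} maximal Cohen--Macaulay over $R^\sharp$, and a minimal $R^\sharp$-free presentation of $M$ is $(R^\sharp)^{2d}\xrightarrow{[\phi\mid yI]}(R^\sharp)^d\to M\to0$, the extra columns $yI$ recording the relations $y\cdot(\text{generators})=0$. Hence $\Omega^\sharp M=\operatorname{im}[\phi\mid yI]$, and I would compute its relations: using $\phi\psi=-y^2I$ in $R^\sharp$ one checks that the first syzygy matrix is $\begin{pmatrix}\psi & yI\\ yI & -\phi\end{pmatrix}$, which together with its partner $\begin{pmatrix}\phi & yI\\ yI & -\psi\end{pmatrix}$ is a matrix factorization of $f+y^2$; this simultaneously confirms $\Omega^\sharp M\in\MCM(R^\sharp)$ and identifies $\Omega^\sharp M$ with its cokernel. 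Reducing this matrix modulo $y$ gives the block-diagonal matrix $\operatorname{diag}(\psi,-\phi)$ over $R$, so that $\pi\Omega^\sharp M=\coker\psi\oplus\coker\phi\cong\Omega M\oplus M$, using that $\coker\psi\cong\Omega M$ for the matrix factorization $(\phi,\psi)$.

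For (2): write $N=\coker\Phi$ for a matrix factorization $(\Phi,\Psi)$ of $f+y^2$ over $S^\sharp$. Since $N$ is maximal Cohen--Macaulay, its associated primes all have dimension $\dim R^\sharp$, while $V(y)\cap\operatorname{Spec}R^\sharp$ is lower-dimensional; hence $y$ lies in none of them and is a nonzerodivisor on $N$, giving a short exact sequence $0\to N\xrightarrow{y}N\to\pi N\to0$. Multiplication by $y$ lifts to the two-periodic resolution of $N$ as multiplication by $y$ in each degree, and the mapping cone of this chain endomorphism resolves $\pi N$; reading off the next differential presents $\Omega^\sharp\pi N$ as $\coker\begin{pmatrix}-\Phi & 0\\ yI & \Psi\end{pmatrix}$. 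On the other hand $N\oplus\Omega^\sharp N=\coker\operatorname{diag}(\Phi,\Psi)$ (again using $\Omega^\sharp N\cong\coker\Psi$), so the lemma reduces to the claim that these two matrix factorizations of $f+y^2$ are equivalent.

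The main obstacle is exactly this last equivalence: the off-diagonal $yI$ cannot be cleared by naive block-triangular operations, since that would require inverting $\Phi$ or $\Psi$. This is where the hypothesis $\operatorname{char}k\neq2$ enters. I would exploit it by completing the square in $y$---equivalently, using the coordinate change that splits the quadratic part $y^2$ (available since $2$ is invertible)---so that, together with the relation $\Phi\Psi=\Psi\Phi=0$ in $R^\sharp$, one can build an explicit invertible pair $(P,Q)$ with $P\begin{pmatrix}-\Phi & 0\\ yI & \Psi\end{pmatrix}=\operatorname{diag}(\Phi,\Psi)\,Q$. The abstract reason for the splitting is already visible in the smallest case: after rewriting $f+y^2=uv$, the ring $R^\sharp$ becomes a product-type hypersurface and the cokernel of the mapping-cone matrix decomposes into the two branch modules. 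I expect the construction of $(P,Q)$---or, equivalently, the clean change of variables---to be where essentially all of the work and the characteristic hypothesis reside; the remaining checks are depth counts verifying that each cokernel is maximal Cohen--Macaulay and that the presentations used are minimal.
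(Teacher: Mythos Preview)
The paper does not prove this lemma at all: it is stated as Kn\"orrer's result with references to \cite[Chapter~8.3]{LW} and \cite[Chapter~12]{Y}, and the text moves directly to a remark about the case $m>2$. So there is no ``paper's own proof'' to compare against; I can only assess your argument on its merits.

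Your treatment of part~(1) is correct and is exactly the standard matrix-factorization argument. Your setup for part~(2) via the mapping cone of multiplication by $y$ on the two-periodic resolution of $N$ is also correct, and the presentation matrix $\begin{pmatrix}-\Phi & 0\\ yI & \Psi\end{pmatrix}$ for $\Omega^\sharp\pi N$ is right (minimality holds because all entries lie in the maximal ideal). The genuine gap is in your account of how $\operatorname{char}k\neq 2$ produces the equivalence with $\operatorname{diag}(\Phi,\Psi)$. ``Completing the square in $y$'' and ``rewriting $f+y^2=uv$'' are not available moves here: $f$ does not involve $y$, so there is nothing to complete, and $f+y^2$ is generically irreducible, so $R^\sharp$ is not a ``product-type hypersurface.'' (That factorization trick belongs to the \emph{two}-variable periodicity $f+y^2+z^2=f+(y+iz)(y-iz)$, not to a single square.)

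The actual mechanism is a derivative trick: differentiate $\Phi\Psi=(f+y^2)I$ and $\Psi\Phi=(f+y^2)I$ with respect to $y$ to get $\Phi_y\Psi+\Phi\Psi_y=2yI$ and $\Psi_y\Phi+\Psi\Phi_y=2yI$, then \emph{divide by $2$}. Setting $\alpha=\tfrac12\Phi_y$ and $\beta=\tfrac12\Psi_y$ gives identities $\alpha\Psi+\Phi\beta=yI$ and $\beta\Phi+\Psi\alpha=yI$ relating the off-diagonal $yI$ to $\Phi,\Psi$, and the invertible intertwiners $(P,Q)$ you are looking for are then built explicitly out of $I,\alpha,\beta$. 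This is where, and only where, the hypothesis $\operatorname{char}k\neq 2$ enters. Without this ingredient your proposed route to the equivalence does not go through.
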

   
   \begin{remark}
    For $m > 2$, the second isomorphism is no longer true. See \cite{LD} for when exactly there is an isomorphism. However, the first isomorphism holds true by following the steps in the proof of Kn{\"orrer}'s lemma. In other words, $\pi$ has the weak MCM extending property.  
   \end{remark}
   
   \begin{theorem}\label{pullbacktheorem}
       Let $R$ and $R^\sharp$ be as above. Then, 
       \begin{align*}
           \pi (\ca(R^\sharp)) \subseteq \ca(R).
       \end{align*}
       When $m = 2$, the equality holds.
   \end{theorem}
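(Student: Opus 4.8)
The plan is to establish the two inclusions separately. For $\pi(\ca(R^\sharp)) \subseteq \ca(R)$, valid for all $m$, I would check that $\pi \colon R^\sharp \to R$ has the weak MCM extending property and then apply Theorem \ref{weaklifting}. Given $N \in \MCM(R)$, put $M = \Omega^\sharp N \in \MCM(R^\sharp)$; part (1) of Knörrer's lemma (which, as the following remark notes, holds for every $m$) gives $\pi M = M\otimes_{R^\sharp} R = \pi\Omega^\sharp N \cong N \oplus \Omega N$, so $N$ is a direct summand of $\pi M$ and trivially $N \in \langle N\rangle$. Thus $\pi$ has the property and the inclusion follows.

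For the reverse inclusion when $m = 2$ I would prove the sharper statement that any $r \in \ca(R)$, regarded in $R^\sharp$ along a lift $S \to R^\sharp$, already lies in $\ca(R^\sharp)$; since $\pi$ returns this element to $r$, this yields $\ca(R) \subseteq \pi(\ca(R^\sharp))$. By Lemma \ref{mainlemma} it suffices to show $r$ stably annihilates every $N \in \MCM(R^\sharp)$. Part (2) of Knörrer's lemma exhibits $N$ as a direct summand of $\Omega^\sharp \pi N$, and $\pi N \in \MCM(R)$ is stably annihilated by $r$ over $R$; so the whole task reduces to showing that the lift of $r$ stably annihilates $\Omega^\sharp L$ over $R^\sharp$ for an arbitrary $L \in \MCM(R)$.

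Writing $L = \coker A$ for a matrix factorization $(A,B)$ of $f$ over $S$, the module $\Omega^\sharp L$ is the cokernel of the Knörrer matrix factorization
\begin{align*}
\alpha = \begin{bmatrix} A & yI \\ -yI & B\end{bmatrix}, \qquad \beta = \begin{bmatrix} B & -yI \\ yI & A\end{bmatrix}
\end{align*}
of $f + y^2$. Because $r$ stably annihilates $L$, Remark \ref{twoperiodicityremark} supplies matrices $C,D$ over $S$ with $BC + DA \equiv rI$ and $AD + CB \equiv rI$ modulo $f$; lifting and substituting $f \equiv -y^2$ in $R^\sharp$ gives $BC + DA = rI - y^2G$ and $AD + CB = rI - y^2 H$ for some $G,H$. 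Multiplying these identities by $A$ and by $B$ and cancelling the nonzerodivisor $f$ produces the relations $AG = HA$ and $GB = BH$. I would then verify that
\begin{align*}
\Gamma = \begin{bmatrix} C & 0 \\ -yG & D\end{bmatrix}, \qquad \Delta = \begin{bmatrix} D & 0 \\ yH & C\end{bmatrix}
\end{align*}
satisfy $\beta\Gamma + \Delta\alpha = \alpha\Delta + \Gamma\beta = rI$, the off-diagonal blocks vanishing exactly because of $AG = HA$ and $GB = BH$. By Remark \ref{twoperiodicityremark} this shows $r$ stably annihilates $\coker\alpha \cong \Omega^\sharp L$, completing the reduction and hence the reverse inclusion.

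The main obstacle is this final homotopy computation, and specifically the observation that the correction terms $G,H$ arising when one lifts the homotopy from $R$ to $S$ obey $AG = HA$ and $GB = BH$ — this is precisely what forces the off-diagonal entries of $\beta\Gamma + \Delta\alpha$ and $\alpha\Delta + \Gamma\beta$ to cancel. The dependence on $m = 2$ enters here through the shape of the factorization of $f + y^2$ and the substitution $f \equiv -y^2$; for $m > 2$ both this computation and part (2) of Knörrer's lemma break down, consistent with the theorem asserting equality only when $m = 2$.
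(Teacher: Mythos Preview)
Your argument is correct and follows the same strategy as the paper: the inclusion $\pi(\ca(R^\sharp))\subseteq\ca(R)$ via the weak MCM extending property of $\pi$ and Theorem~\ref{weaklifting}, and the reverse inclusion for $m=2$ by reducing, through part~(2) of Kn\"orrer's Lemma, to producing a null-homotopy for multiplication by a lift of $r$ on the Kn\"orrer matrix factorization of $\Omega^\sharp(\pi N)$.

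The only substantive difference is in that last computation. The paper simply lifts the homotopy pair $(g,h)$ to $R^\sharp$ and asserts the block identity
\[
\begin{bmatrix} B & -yI \\ yI & A \end{bmatrix}\begin{bmatrix} h & 0 \\ 0 & g \end{bmatrix}+\begin{bmatrix} g & 0 \\ 0 & h \end{bmatrix}\begin{bmatrix} A & yI \\ -yI & B \end{bmatrix}=zI,
\]
implicitly treating $Ag+hB=zI$ and $Bh+gA=zI$ as exact equalities in $R^\sharp$. You observe that after lifting one only has these equalities modulo $f$, i.e.\ modulo $y^2$ in $R^\sharp$, and you track the correction terms $G,H$, derive $AG=HA$ and $GB=BH$ by multiplying by $A,B$ and cancelling $f$, and then exhibit the modified homotopies $\Gamma,\Delta$. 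This is a genuine refinement: your version makes rigorous a step the paper leaves tacit, at the cost of slightly more bookkeeping. Otherwise the two proofs are the same.
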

   \begin{proof}
       The first part follows from Proposition \ref{weaklifting}. Let $m = 2$. Let $\pi(a) \in R  = \pi(R^\sharp)$. We show that if $\pi(a) \in \ca(R)$, then $a \in \ca(R^\sharp)$. Since $\pi(a)\Ext_R^1(\pi M, \pi N) = 0$ for all $m,n \in \MCM(R^\sharp)$, we have $a \Ext_{R^\sharp/y R^\sharp}^1(M/yM,N/yN)=0$. There are isomorphisms
       \begin{align*}
        \Ext_{R^\sharp/y R^\sharp}^1(M/yM,N/yN) \cong \Ext_{R^\sharp}^2(M/yM,N) \cong \Ext_{R^\sharp}^1(\Omega^\sharp(M/yM),N).
       \end{align*}
       Since $\Omega^\sharp(M/yM) \cong \Omega^\sharp \pi M \cong M \oplus \Omega^\sharp M$ by Lemma \ref{Knorrerlemma}(2), we get $a \Ext_{R^\sharp}^1(M,N) = 0$. Hence, $a \in \ca(R^\sharp)$.

   \end{proof}

   \begin{example}
    Let $R^\sharp = k[\![x,y,z]\!]/(x^3 + y^4 + z^2)$, $R_1 = k[\![x,y]\!]/(x^3 + y^4) $ and $R_2 = k[\![x,z]\!]/(x^3 +  z^2)$. Then, $R^\sharp$ is a double branched cover of $R_1$ and $4$-branched cover of $R_2$. We compute $\ca(R_1) = (x^2,xy,y^2)$ and $\ca(R_2) = (x,z)$. Hence, we have $\ca(R^\sharp) = (x^2, xy,y^2, z)$ by Theorem \ref{pullbacktheorem}. This example also shows that the theorem fails for $m > 2$. Indeed, $\ca(R^\sharp) \neq \pi_2^{-1} \ca(R_2) = (x,y,z)$ where $\pi_2: R^\sharp \to R_2$ is the projection map.
   \end{example}

   \begin{theorem}\label{mainbranchedtheorem}
    Suppose $k$ is an algebraically closed field with $\mathrm{char} k \neq 2$. Let $R = k[\![ x,y]\!] / (f)$ be the coordinate ring of a reduced curve singularity. For $S = k[\![x,y, z_1, \ldots, z_l]\!]/(f+z_1^2 + \ldots + z_l^2)$, we have
    \begin{align*}
     \dim_k \frac{S}{J_{S}} = 2 \dim_k \frac{S}{\ca(S)} -  r + 1
    \end{align*}
    where $r$ is the number of branches of the curve $f$ at its singular point.
   \end{theorem}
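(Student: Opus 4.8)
The plan is to compute the two quantities $\dim_k S/J_S$ and $\dim_k S/\ca(S)$ separately, expressing each in terms of an invariant of the base curve $R$, and then to invoke the Milnor--Jung formula for $R$ stated in the remark above. Throughout I assume $\mathrm{char}\, k \neq 2$, as is needed for Kn\"orrer's lemma and hence for Theorem \ref{pullbacktheorem}. Write $R_0 = R$ and, for $1 \leq j \leq l$,
\begin{align*}
R_j = k[\![x,y,z_1,\ldots,z_j]\!]/(f + z_1^2 + \cdots + z_j^2),
\end{align*}
so that $R_l = S$ and each $R_j$ is the double branched cover of the (Gorenstein) ring $R_{j-1}$ in the branch variable $z_j$, with projection $\pi_j : R_j \to R_j/(z_j) = R_{j-1}$.

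First I would handle the Jacobian side, which is a direct computation. Since $S$ is a hypersurface, $J_S$ is generated by the partials of $F = f + z_1^2 + \cdots + z_l^2$, namely $\partial f/\partial x$, $\partial f/\partial y$ and $2z_1, \ldots, 2z_l$; as $\mathrm{char}\, k \neq 2$ this gives $J_S = (\partial f/\partial x,\, \partial f/\partial y,\, z_1, \ldots, z_l)$. Killing the $z_i$ in $S/J_S$ sends $F$ to $f$, so
\begin{align*}
\frac{S}{J_S} = \frac{k[\![x,y,z_1,\ldots,z_l]\!]}{(F,\, \partial f/\partial x,\, \partial f/\partial y,\, z_1,\ldots,z_l)} \cong \frac{k[\![x,y]\!]}{(f,\, \partial f/\partial x,\, \partial f/\partial y)} = \frac{R}{J_R}.
\end{align*}
Hence $\dim_k S/J_S = \dim_k R/J_R = \mu_R$. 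In particular this is finite, so $S$ has an isolated singularity and both $J_S$ and $\ca(S) \supseteq J_S$ have finite colength.

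The main step is the cohomology annihilator side, where I would upgrade the equality of cohomology annihilators in Theorem \ref{pullbacktheorem} to an isomorphism of quotient rings. Fix a stage $\pi_j : R_j \to R_{j-1}$, so $\ker \pi_j = (z_j)$. As in the proof of Theorem \ref{pullbacktheorem}, $\tfrac{d}{dz_j}(f + z_1^2 + \cdots + z_j^2) = 2z_j$ gives $z_j \in J_{R_j} \subseteq \ca(R_j)$, so $\ker \pi_j = (z_j) \subseteq \ca(R_j)$. Since $m = 2$, Theorem \ref{pullbacktheorem} yields $\pi_j(\ca(R_j)) = \ca(R_{j-1})$, and combined with $(z_j) \subseteq \ca(R_j)$ this forces $\ca(R_j) = \pi_j^{-1}(\ca(R_{j-1}))$: the inclusion $\subseteq$ is exactly $\pi_j(\ca(R_j)) \subseteq \ca(R_{j-1})$, while for $\supseteq$ one takes $w$ with $\pi_j(w) \in \ca(R_{j-1}) = \pi_j(\ca(R_j))$, chooses $w' \in \ca(R_j)$ with $\pi_j(w') = \pi_j(w)$, and notes $w - w' \in \ker \pi_j \subseteq \ca(R_j)$. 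As $\ker \pi_j \subseteq \ca(R_j) = \pi_j^{-1}(\ca(R_{j-1}))$, the projection descends to an isomorphism $R_j/\ca(R_j) \cong R_{j-1}/\ca(R_{j-1})$, whence $\dim_k R_j/\ca(R_j) = \dim_k R_{j-1}/\ca(R_{j-1})$. Iterating over $j = 1, \ldots, l$ gives
\begin{align*}
\dim_k \frac{S}{\ca(S)} = \dim_k \frac{R}{\ca(R)} = \delta_R,
\end{align*}
the last equality being Theorem \ref{maintheorem} together with $\delta_R = \dim_k R/\co(R)$. I expect this upgrade to be the crux: the weak MCM extending property on its own only yields the surjection $R_j/\ca(R_j) \twoheadrightarrow R_{j-1}/\ca(R_{j-1})$, and it is precisely the fact that the branch element $z_j$ lies in $\ca(R_j)$ that collapses $\ker \pi_j$ and promotes this to an isomorphism.

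Finally I would combine the two computations with the Milnor--Jung formula $\mu_R = 2\delta_R - r + 1$ for the plane curve $R$, where $r$ is the number of branches of $f$ at the origin. This yields
\begin{align*}
\dim_k \frac{S}{J_S} = \mu_R = 2\delta_R - r + 1 = 2\dim_k \frac{S}{\ca(S)} - r + 1,
\end{align*}
as desired.
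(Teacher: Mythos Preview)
Your proof is correct and is exactly the argument the paper intends: the theorem is stated there without proof, immediately after Theorem \ref{pullbacktheorem} and the remark recalling the Milnor--Jung formula $\mu_R = 2\delta_R - r + 1$, so the reader is expected to combine these as you do. Your explicit upgrade of $\pi_j(\ca(R_j)) = \ca(R_{j-1})$ to $\ca(R_j) = \pi_j^{-1}(\ca(R_{j-1}))$ via $z_j \in J_{R_j} \subseteq \ca(R_j)$, and the parallel reduction of $S/J_S$ to $R/J_R$ via $z_j \in J_S$, are precisely the missing details.
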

   
   \begin{proof}
 When $l = 0$, the assertion follows from the Milnor-Jung formula. So, let $l \geq 1$. Then, we have $z_1,\ldots, z_l \in J_S \subseteq \ca(S)$. Hence, there are isomorphisms
 \begin{align*}
  S/J_S \cong R/J_R, \quad S/\ca(S) \cong R/\ca(R)
 \end{align*}
 as $k$-vector spaces, the latter of which is shown by Theorem \ref{pullbacktheorem}. The theorem in the case $l = 0$ shows the assertion.
\end{proof}

   \subsection{Completion of a One Dimensional Gorenstein Ring} We show in Theorem \ref{maintheorem} that over a one dimensional reduced complete Gorenstein local ring, the cohomology annihilator coincides with the conductor ideal. In this subsection, we show that we can drop the completeness assumption. See also \cite{BHST}. Our result follows from their Theorem 4.5 but we give a different proof.
   
   \begin{lemma}
    Let $R$ be a local ring and $M \in \module(R)$. If $M$ is locally free at the minimal primes of $R$, then $M \oplus \Omega M$ has a well defined rank. In particular, if $R$ is reduced, then for any $M \in \MCM(R)$, $M \oplus \Omega M$ has a well defined rank.
   \end{lemma}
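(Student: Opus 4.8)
The plan is to exploit the short exact sequence defining the syzygy and to observe that it splits after localizing at a minimal prime. Fix a free cover giving a sequence $0 \to \Omega M \to F \to M \to 0$ with $F \cong R^n$ free of some fixed rank $n$. First I would localize this sequence at an arbitrary minimal prime $\mathfrak{p}$ of $R$, obtaining an exact sequence $0 \to (\Omega M)_\mathfrak{p} \to F_\mathfrak{p} \to M_\mathfrak{p} \to 0$ of $R_\mathfrak{p}$-modules.

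The key step is that the hypothesis forces this localized sequence to split: since $M_\mathfrak{p}$ is free, it is projective over $R_\mathfrak{p}$, so the surjection $F_\mathfrak{p} \to M_\mathfrak{p}$ admits a section. Consequently $F_\mathfrak{p} \cong M_\mathfrak{p} \oplus (\Omega M)_\mathfrak{p}$, which rearranges to $(M \oplus \Omega M)_\mathfrak{p} \cong F_\mathfrak{p} \cong R_\mathfrak{p}^n$. The crucial point is that the integer $n$ is the rank of the \emph{fixed} free module $F$ and is therefore independent of the choice of minimal prime. Thus $M \oplus \Omega M$ localizes to a free module of the same rank $n$ at every minimal prime, which is precisely the assertion that it has a well-defined rank. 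This also explains why one must take the sum with $\Omega M$ rather than use $M$ alone: a module over a local ring with several minimal primes (e.g. $R = k[\![x,y]\!]/(xy)$) may localize to free modules of differing ranks at different minimal primes, but pairing it with its syzygy recovers the uniform rank of $F$.

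For the ``in particular'' clause I would simply verify that the hypothesis is automatic when $R$ is reduced. At any minimal prime $\mathfrak{p}$ the local ring $R_\mathfrak{p}$ is Artinian and reduced, hence a field; every finitely generated module over a field is free, so $M_\mathfrak{p}$ is free for any finitely generated $M$, and in particular for any $M \in \MCM(R)$. Applying the first part then yields the conclusion.

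I do not expect a genuine obstacle here. The only care needed is in fixing the meaning of ``well-defined rank'' (freeness of the same rank at every minimal prime, equivalently freeness of $N \otimes_R Q(R)$ over the total quotient ring $Q(R)$) and in noting that it is the projectivity of $M_\mathfrak{p}$ that makes the localized syzygy sequence split. Everything else is routine localization.
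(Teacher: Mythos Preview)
Your argument is correct and is essentially identical to the paper's proof: both fix a syzygy sequence $0 \to \Omega M \to F \to M \to 0$, localize at a minimal prime, use freeness of $M_\mathfrak{p}$ to split it, and conclude that $(M \oplus \Omega M)_\mathfrak{p} \cong F_\mathfrak{p}$ has the same rank at every minimal prime. Your write-up in fact goes a bit further than the paper by spelling out the ``in particular'' clause (that $R_\mathfrak{p}$ is a field when $R$ is reduced), which the paper leaves implicit.
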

   \begin{proof}
       Let $p$ be a minimal prime. Consider the short exact sequence
       \begin{align*}
           0 \to \Omega M \to F \to M \to 0
       \end{align*}
       with $F$ a free $R$-module. Localizing at $p$, we get
       \begin{align*}
           0 \to (\Omega M)_p \to F_p \to M_p \to 0.
       \end{align*}
       We know that $M_p$ is free, hence this sequence splits and we get
       \begin{align*}
           F_p \cong (M \oplus \Omega M)_p
       \end{align*}
       which by definition tells us that $M \oplus \Omega M$ has well defined rank. 
   \end{proof}

   \begin{remark}
    A Noetherian local ring is Gorenstein if and only if its completion is Gorenstein. And note that the depth of a module is equal to the depth of its completion. In particular, $M \in \MCM(R)$ if and only if $\hat{M} \in \MCM(\hat{R})$. In other words, the combination of the above lemmas tells us that the inclusion $R \hookrightarrow \hat{R}$ has the weak MCM extending property if $\hat{R}$ is reduced. Indeed, we can see this as follows. If $N \in \MCM(\hat{R})$, then  $N \oplus \Omega_{\hat{R}} N$ has a well defined rank. Note that as $\hat{R}$ is reduced, $N$ is locally free at the minimal primes. Hence, $N \oplus \Omega_{\hat{R}} N \cong M \otimes_R \hat{R}$ for some $M \in \MCM(R)$ by \cite[Corollary 4.5]{KW}. 
   \end{remark}

   \begin{theorem}\label{completionintersection}
       Let $R$ be a one dimensional Gorenstein local ring whose completion $\hat{R}$ is reduced. Then,
       \begin{align*}
           \ca(R) = \ca(\hat{R}) \cap R.
       \end{align*}
   \end{theorem}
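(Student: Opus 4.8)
The statement splits into two inclusions, and I would attack them by completely different means: the inclusion $\ca(R) \subseteq \ca(\hat{R}) \cap R$ is a direct application of the machinery already in place, while the reverse inclusion $\ca(\hat{R}) \cap R \subseteq \ca(R)$ rests on a faithfully flat descent for stable endomorphism rings.

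For the first inclusion, observe that $\ca(R) \subseteq R$ is automatic, so it suffices to prove $\ca(R) \subseteq \ca(\hat{R})$. By the remark preceding the statement, the reducedness of $\hat{R}$ guarantees that the flat local inclusion $R \hookrightarrow \hat{R}$ has the weak MCM extending property: every $N \in \MCM(\hat{R})$ satisfies $N \oplus \Omega_{\hat{R}} N \cong M \otimes_R \hat{R}$ for a suitable $M \in \MCM(R)$. Theorem \ref{weaklifting} applied to this inclusion then yields $\ca(R) \subseteq \ca(\hat{R})$ at once, hence $\ca(R) \subseteq \ca(\hat{R}) \cap R$.

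For the reverse inclusion, let $z \in \ca(\hat{R}) \cap R$ and fix an arbitrary $M \in \MCM(R)$; I want to show $z \in \sann(M)$, for then $z \in \ca(R)$ by Lemma \ref{mainlemma}. Since $\hat{R}$ is Gorenstein and depth is preserved under completion, $\hat{M} := M \otimes_R \hat{R}$ lies in $\MCM(\hat{R})$, and Lemma \ref{mainlemma} over $\hat{R}$ gives that multiplication by $z$ is zero in $\underline{\mathrm{End}}_{\hat{R}}(\hat{M})$. The plan is to detect the vanishing of the class of multiplication by $z$ already in $\sEnd(M)$: I would use that the natural comparison map $\sEnd(M) \to \underline{\mathrm{End}}_{\hat{R}}(\hat{M})$ is injective and carries the class of multiplication by $z$ over $R$ to the class of multiplication by $z$ over $\hat{R}$ (the latter holds because $z \in R$, so base change of the map ``multiply by $z$'' is again ``multiply by $z$''). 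Injectivity then forces multiplication by $z$ to be stably trivial on $M$, that is, $z \in \sann(M)$.

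The technical core, and the step I expect to require the most care, is the identification
\begin{align*}
\sEnd(M) \otimes_R \hat{R} \cong \underline{\mathrm{End}}_{\hat{R}}(\hat{M})
\end{align*}
together with the injectivity of $\sEnd(M) \to \sEnd(M) \otimes_R \hat{R}$. For the isomorphism I would complete the defining right-exact sequence
\begin{align*}
\Hom(M,R) \otimes_R M \xrightarrow{\nu} \End(M) \to \sEnd(M) \to 0
\end{align*}
from the proof of the Proposition in Section 4: since $\hat{R}$ is $R$-flat and $M$ is finitely presented, tensoring with $\hat{R}$ preserves exactness and matches $\Hom(M,R) \otimes_R \hat{R}$, $\End(M) \otimes_R \hat{R}$, and $\nu \otimes_R \hat{R}$ with $\mathrm{Hom}_{\hat{R}}(\hat{M},\hat{R})$, $\mathrm{End}_{\hat{R}}(\hat{M})$, and the corresponding map over $\hat{R}$, so the cokernels agree. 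The injectivity is the statement that the completion map of the finitely generated $R$-module $\sEnd(M)$ is injective, which holds by Krull's intersection theorem, equivalently by faithful flatness of $R \hookrightarrow \hat{R}$. Once these two facts are in hand the descent closes, and combining the two inclusions gives $\ca(R) = \ca(\hat{R}) \cap R$. I would also note, as a sanity check, that this reverse inclusion uses only faithful flatness and never the reducedness of $\hat{R}$, so reducedness enters the argument solely through the weak MCM extending property used in the first inclusion.
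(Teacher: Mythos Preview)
Your argument is correct and follows essentially the same route as the paper's: the forward inclusion via the weak MCM extending property and Theorem~\ref{weaklifting}, the reverse via the injection $\sEnd(M)\hookrightarrow\widehat{\sEnd(M)}=\underline{\mathrm{End}}_{\hat R}(\hat M)$ and the fact that $z$ kills the right-hand side. The paper states that injection in one line; you spell out the flat base-change identification $\sEnd(M)\otimes_R\hat R\cong\underline{\mathrm{End}}_{\hat R}(\hat M)$ and the faithful-flatness/Krull argument for injectivity, which is exactly the justification the paper is implicitly invoking.
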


   \begin{proof}
       We know that the inclusion of $R$ into $\hat{R}$ has the weak MCM extending property. Hence, $\ca(R) \subseteq \ca(\hat{R}) \cap R$ by Proposition \ref{weaklifting}. For the other direction, we note the inclusion 
       \begin{align*}
           \sEnd_R(M) \hookrightarrow \widehat{\sEnd_R(M)} =  \underline{\mathrm{End}}_{\hat{R}}(\hat{M})  .
       \end{align*}
       If $r\in  \ca(\hat{R}) \cap R$, then it annihilates $\underline{\mathrm{End}}_{\hat{R}}(\hat{M})$ and thus $\sEnd_R(M)$ for all $M \in \MCM(R)$.
   \end{proof}
   
   \begin{theorem}
       Let $R$ be a one dimensional Gorenstein local ring with reduced completion. Then, $\ca(R) = \co(R)$.
   \end{theorem}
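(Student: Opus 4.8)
The plan is to deduce the statement by chaining the three results that immediately precede it, thereby reducing the general (non-complete) case to the complete case already settled in Theorem \ref{maintheorem}. First I would check that the completion $\hat{R}$ inherits all the hypotheses needed to apply that theorem. It is complete by construction and reduced by assumption; it is one dimensional because completion preserves Krull dimension; and it is Gorenstein because, as recorded in the remark above, a Noetherian local ring is Gorenstein if and only if its completion is. Thus $\hat{R}$ is a one dimensional reduced complete Gorenstein local ring, and Theorem \ref{maintheorem} applies to give $\ca(\hat{R}) = \co(\hat{R})$.

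Next I would invoke the preceding theorem, which asserts $\ca(R) = \ca(\hat{R}) \cap R$ precisely under the hypothesis that $\hat{R}$ is reduced. Substituting the identity $\ca(\hat{R}) = \co(\hat{R})$ obtained in the first step yields $\ca(R) = \co(\hat{R}) \cap R$. Finally I would apply the cited lemma of Wang--Wiegand (the \cite{WW} A.1 Lemma), valid for any one dimensional Noetherian local ring, which states $\co(R) = \co(\hat{R}) \cap R$. Together these give
\[
\ca(R) = \ca(\hat{R}) \cap R = \co(\hat{R}) \cap R = \co(R),
\]
which is the desired equality.

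There is no genuine computational obstacle here: the argument is a three-line composition of results already in hand. The only point demanding care is the verification, in the first paragraph, that $\hat{R}$ simultaneously satisfies every hypothesis of Theorem \ref{maintheorem}---reducedness, completeness, the Gorenstein property, and one-dimensionality---so that the reduction to the complete case is legitimate. Each of these is a standard fact about completion recorded in the surrounding text, so once they are assembled the conclusion is immediate.
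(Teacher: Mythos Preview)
Your proposal is correct and follows essentially the same approach as the paper's own proof: apply Theorem \ref{maintheorem} to $\hat{R}$, then intersect with $R$ and combine the identities $\ca(R) = \ca(\hat{R}) \cap R$ and $\co(R) = \co(\hat{R}) \cap R$ from the two preceding results. Your write-up is simply more explicit in verifying that $\hat{R}$ satisfies the hypotheses of Theorem \ref{maintheorem}, which the paper leaves implicit.
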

   
   \begin{proof}
     By Theorem \ref{maintheorem}, we know that $\ca(\hat{R}) = \co(\hat{R})$. Intersecting with $R$ we get the desired result by Theorem \ref{completionintersection} and \cite[A.1.Lemma]{WW}.
   \end{proof}
   
   \begin{remark}
    Let $R$ be a Gorenstein local ring and $R^h$ be its Henselization. For any maximal Cohen-Macaulay $R^h$-module $N$, there is an $M \in \MCM(R)$ such that $N$ is a direct summand of $M \otimes_R R^h$. This follows from Proposition 10.5 and Proposition 10.7 of \cite{LW}. In other words, the inclusion $R \hookrightarrow R^h$ has the weak MCM extending property. As an application of Proposition \ref{weaklifting}, we have the following theorem.
   \end{remark}
    
    \begin{theorem}
        For a Gorenstein local ring $R$, we have
        \begin{align*}
            \ca(R) \subseteq \ca(R^h).
        \end{align*}
    \end{theorem}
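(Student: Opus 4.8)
The plan is to recognize this statement as a direct instance of Theorem~\ref{weaklifting} applied to the canonical inclusion $f \colon R \hookrightarrow R^h$. To invoke that theorem I must verify its two hypotheses: that $f$ is a homomorphism of Gorenstein rings, and that $f$ has the weak MCM extending property. Once both are in place, the conclusion is immediate.

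For the first hypothesis, I would check that $R^h$ is again Gorenstein and local. The Henselization $R^h$ is local by construction, and it is a flat local extension of $R$ sharing the same completion, $\widehat{R^h} \cong \hat{R}$. Since a Noetherian local ring is Gorenstein precisely when its completion is (as already recorded above), and $R$ is Gorenstein, its completion $\hat{R}$ is Gorenstein; hence $\widehat{R^h}$ is Gorenstein and therefore so is $R^h$. Thus $f \colon R \to R^h$ is a homomorphism of Gorenstein local rings, as required by Theorem~\ref{weaklifting}.

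For the second hypothesis, I would appeal to the preceding Lemma: for every $N \in \MCM(R^h)$ there is an $M \in \MCM(R)$ with $N$ a direct summand of $M \otimes_R R^h$. Taking $N' = N$ in the definition of the weak MCM extending property, so that trivially $N \in \langle N \rangle$, shows that $f$ enjoys this property; this is exactly the content of the Remark stated immediately before the theorem.

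With both hypotheses confirmed, Theorem~\ref{weaklifting} gives $f(\ca(R)) \subseteq \ca(R^h)$, which under the identification of $R$ with its image in $R^h$ reads precisely as $\ca(R) \subseteq \ca(R^h)$. I do not anticipate a genuine obstacle: the argument is a clean packaging of the Lemma and Theorem~\ref{weaklifting}, and the only point demanding care is confirming that $R^h$ is Gorenstein, since Theorem~\ref{weaklifting} is stated only for maps between Gorenstein rings.
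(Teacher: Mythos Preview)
Your proposal is correct and follows exactly the paper's approach: the paper simply states the theorem ``as an application of Theorem~\ref{weaklifting},'' relying on the preceding Lemma and Remark to supply the weak MCM extending property, and you have faithfully filled in those details along with the (implicit) verification that $R^h$ is Gorenstein.
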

\section{Relation with Noncommutative Resolutions}
 \paragraph*{} Let $R$ be a Gorenstein ring. For this section, we say that a finite $R$-algebra $\Lambda$ is a noncommutative resolution of $R$ if $\Lambda \in \MCM(R)$ and $\gldim \Lambda < \infty$. Theorem \ref{maintheorem} tells us that the cohomology annihilator of a one dimensional complete reduced Gorenstein local ring is equal to the stable annihilator of its normalization. The normalization, in this case, is a one dimensional regular ring. In particular, it is a Cohen-Macaulay ring and since it is finite over the original ring, it is a maximal Cohen-Macaulay module. Hence, it is a noncommutative resolution of the given curve singularity. We have a noncommutative resolution whose stable annihilator over the singularity is equal to the cohomology annihilator of that singularity. In other words, in order to annihilate the singularity category, it is enough to stably annihilate a noncommutative resolution.
 
     \begin{lemma}\label{endoring1}
     Let $R$ be a Gorenstein ring and $M,N \in \MCM(R)$. If $N$ has a free summand and $\Lambda = \End_R(M \oplus N) \in \MCM(R)$, then 
     \begin{align*}
         \sann_R \Lambda \subseteq \sann_R M.
     \end{align*}
     \end{lemma}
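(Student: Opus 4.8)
The plan is to exhibit $M$ as a direct summand of $\Lambda$ \emph{as an $R$-module}, and then invoke the additivity of the stable annihilator over direct sums recorded in Section~2. Once $M$ is a summand of $\Lambda$, the containment $\sann\Lambda \subseteq \sann M$ is purely formal, so the whole content of the lemma is the summand claim, which is exactly where the hypothesis on $N$ enters.

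First I would write down the $R$-module decomposition of $\Lambda$. Put $L = M\oplus N$, so that $\Lambda = \End(L) = \Hom(L,L)$. Decomposing $L = \bigoplus_i L_i$ into its building summands, we get an $R$-module direct sum decomposition $\Hom(L,L) = \bigoplus_{i,j}\Hom(L_i,L_j)$, where the $(i,j)$-piece is genuinely a direct summand of $\Lambda$: it is the image of the $R$-linear idempotent on $\Lambda$ sending $\varphi$ to the composite of $\varphi$ with the structural projection onto $L_i$ and the structural inclusion of $L_j$.

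The key step uses that $N$ has a free summand. Write $N \cong R \oplus N''$, so $L \cong M \oplus R \oplus N''$. Among the summands of $\Lambda$ listed above is then $\Hom(R,M)$, whose source is the free summand $R\subseteq N\subseteq L$ and whose target is the $M$-summand. Since $\Hom(R,M)\cong M$ via $\varphi\mapsto\varphi(1)$, this exhibits $M$ as an $R$-module direct summand of $\Lambda$; say $\Lambda \cong M\oplus\Lambda'$ as $R$-modules. As $\Lambda$ is maximal Cohen–Macaulay and $\Ext^i(\Lambda',R)$ is a summand of $\Ext^i(\Lambda,R)=0$ for $i>0$, the complementary summand $\Lambda'$ is again MCM, so this is a decomposition in $\sMCM(R)$. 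I would then finish by specializing the identity $\annT(X\oplus Y)=\annT X\cap\annT Y$ of Section~2 to $\TT=\sMCM(R)$, which reads $\sann(M\oplus\Lambda')=\sann M\cap\sann\Lambda'$; hence $\sann\Lambda = \sann M \cap \sann\Lambda' \subseteq \sann M$, as claimed.

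The only point meriting genuine care — and the place I would double-check — is the identification of $M$ as an honest $R$-module summand of $\Lambda$ through the idempotent bookkeeping; everything after that is formal. It is worth emphasizing that the free summand of $N$ is doing all the work here: it is precisely what puts a copy of $R$ in the \emph{source} slot so that $\Hom(R,M)\cong M$ occurs as a summand, rather than only the endomorphism piece $\Hom(M,M)=\End(M)$, whose annihilator need not sit inside $\sann M$. I do not anticipate any serious obstacle beyond this verification.
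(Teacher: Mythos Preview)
Your proof is correct and follows essentially the same route as the paper: you show that $M$ is an $R$-module direct summand of $\Lambda=\End(M\oplus N)$ via the piece $\Hom(R,M)\cong M$ coming from the free summand of $N$, and then conclude using the additivity of stable annihilators from Section~2 (the paper cites Lemma~\ref{triangulatedlemma}, whose $n=1$ case is exactly the identity $\annT(X\oplus Y)=\annT X\cap\annT Y$ you invoke). Your version simply spells out the idempotent bookkeeping and the MCM property of the complement that the paper leaves implicit.
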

     
     \begin{proof}
         We know that $\Hom_R(M \oplus N, M \oplus N)$ contains $\Hom_R(N,M)$ as a direct summand, which contains $\Hom_R(R,M) = M$ as a direct summand. The result follows from Lemma \ref{triangulatedlemma}.
     \end{proof}
 
 \paragraph*{}We, then, raise the following question: Is it always possible to find a nice algebra, preferably related to the singularities of our ring, whose stable annihilator is equal to the cohomology annihilator? We present examples where there is equality and we also give proofs that there is equality up to radicals.
 
 \begin{example}
     Let $R$ be a two dimensional invariant ring defined in Example \ref{twodimensionalexample}. Let $S * G$ be the skew group algebra generated as an $R$-module by elements of the form $(s,g)$ with $s \in S$ and $g \in G$ where the multiplication is defined by the rule
     \begin{align*}
         (s_1, g_1)(s_2, g_2) = (s_1g_1(s_2), g_1g_2).
     \end{align*}
     It is well-known that $R$ has only finitely many nonisomorphic indecomposable maximal Cohen-Macaulay modules. And $S * G \cong \End_R(S)$ is a noncommutative resolution of $R$ \cite{Aus}. We have 
     \begin{align*}
         \sann_R(S * G) = \sann_R( S^{|G|} ) = \sann_R(S) = \ca(R).
     \end{align*}
     So in this case, stable annihilator of a noncommutative resolution is equal to the cohomology annihilator.
     \end{example}
     
     \begin{example} Let $R$ be as in Example \ref{determinantalex}. The matrix $X$ defines a $k[X]$-linear map on the free $k[X]$-module $G$ of rank $n$. We also have every exterior powers $\bigwedge^j X : \bigwedge^j G \to \bigwedge^j G $ for $j = 1, \ldots, n$. Let $M_j = \coker \bigwedge^j X$. Then, each $M_j$ is annihilated by $\det X$ and hence has an $R$-module structure \cite{BuchsbaumRim}. Morover, $M_j$ is maximal Cohen-Macaulay for $j = 1, \ldots, n$. Note that $M_n$ is free of rank $1$.
     
     Let $\Lambda = \End_R(\bigoplus_{j = 1}^n M_j)$. Then, $\Lambda$ is a noncommutative resolution of $R$ \cite{BLV}. In particular, $\Lambda \in \MCM(R)$ and therefore $\ca(R) \subseteq \sann_R(\Lambda)$. On the other hand, as $\bigoplus_{j = 1}^n M_j$ has a free summand, Lemma \ref{endoring1} yields that
     \begin{align*}
         \sann_R \Lambda \subseteq \sann_R M = J_R \subseteq \ca(R) \subseteq \sann_R \Lambda .
     \end{align*}
     Hence, we have a noncommutative resolution $\Lambda$ whose stable annihilator is equal to the cohomology annihilator of $R$.
     \end{example}
     
     \begin{example}
         Let $R$ be as in Example \ref{torusinvariant}. Then, $\Lambda = \End_R(R \oplus S_1 \oplus S_2)$ is a noncommutative resolution of $R$. Hence, we have $\ca(R) \subseteq \sann_R \Lambda$. On the other hand, by Lemma \ref{endoring1}, we have $\sann_R \Lambda \subseteq \sann_R S_1 \cap \sann_R S_2 = \ca(R)$. Therefore, $\sann_R \Lambda = \ca(R)$ and we have a noncommutative resolution whose stable annihilator annihilates the entire singularity category.
     \end{example}

 \begin{proposition}\label{ncrtheorem}
  Let $R$ be a Gorenstein ring and $\Lambda$ be an $R$-algebra of finite global dimension $\delta$ such that $R \to \Lambda$ is a monomorphism. Suppose that $\Lambda$ is maximal Cohen-Macaulay as an $R$-module. Then, for any $M \in \module R$,
  \begin{align*}
      (\sann_R \Lambda)^{e + 1} \subseteq \sann_R M_{\Lambda}^{\st}
  \end{align*}
  where $M_{\Lambda} = M \otimes_R \Lambda$, $M_{\Lambda}^{\st}$ its image in $\sMCM(R)$ and $e$ its projective dimension as a $\Lambda$-module. If, moreover, the map $R \to \Lambda$ is a split monomorphism, then
  \begin{align*}
      (\sann_R \Lambda)^{\delta + 1} \subseteq \ca(R) \subseteq \sann_R \Lambda.
  \end{align*}
 \end{proposition}
 
 \begin{proof}
     If $M_\Lambda$ has projective dimension zero, then it is a summand of a direct sum of copies of $\Lambda$. We have $\sann_R \Lambda \subseteq \sann_R M_\Lambda$. Now, suppose that $M_\Lambda$ has projective dimension $e \geq 1$ and consider a short exact sequence
     \begin{align*}
         0 \to X \to G \to M_\Lambda \to 0
     \end{align*}
      so that $X$ has projective dimension $e - 1$. This short exact sequence produces a distinguished triangle
 \begin{align*}
    X^{\st} \to G \to (M_\Lambda)^{\st} \to \Omega^{-1}X^{\st}
 \end{align*}
 in $\sMCM(R)$. This gives us a distinguished triangle in $\sMCM(R)$ and using Lemma \ref{triangulatedlemma} we get the desired result by induction. If the canonical map $R \to \Lambda$ splits, then $M$ is a summand of $M_{\Lambda}$. Hence,
 \begin{align*}
      (\sann_R \Lambda)^{\delta + 1} \subseteq  (\sann_R \Lambda)^{e + 1}  \subseteq \sann_R M_{\Lambda} \subseteq \sann_R M.
 \end{align*}
 \end{proof}
 
 \begin{proposition}\label{endoring2}
     Let $R$ be a Gorenstein ring and $M, N \in \MCM(R)$. Suppose that $N$ has a free summand and the endomorphism ring $\Lambda = \End_R(M \oplus N)$ is also maximal Cohen-Macaulay. If $M$ is a strong generator of $\sMCM(R)$ with $\langle M \rangle_k = \sMCM(R)$, then 
     \begin{align*}
         (\sann_R \Lambda)^k \subseteq \ca(R) \subseteq \sann_R \Lambda.
     \end{align*}
 \end{proposition}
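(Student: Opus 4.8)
The plan is to sandwich $\ca(R)$ between $(\sann \Lambda)^k$ and $\sann \Lambda$ by chaining together the three structural results already established in this section and its predecessors: the identification of $\ca(R)$ as the annihilator of the entire stable category (Lemma \ref{mainlemma}), the comparison $\sann \Lambda \subseteq \sann M$ forced by the free-summand hypothesis (Lemma \ref{endoring1}), and the power bound governing strong generators (Lemma \ref{triangulatedlemma}). No new ideas beyond these three are needed; the content is in assembling them correctly.

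First I would dispatch the easy inclusion $\ca(R) \subseteq \sann \Lambda$. Since $\Lambda = \End(M \oplus N)$ is maximal Cohen-Macaulay by hypothesis, it is an object of $\sMCM(R)$, and Lemma \ref{mainlemma} identifies $\ca(R)$ with the annihilator of every object of $\sMCM(R)$; in particular $\ca(R) \subseteq \sann \Lambda$.

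For the reverse inclusion up to the $k$th power, I would first invoke Lemma \ref{endoring1}: because $N$ has a free summand and $\Lambda$ is maximal Cohen-Macaulay, that lemma yields $\sann \Lambda \subseteq \sann M$, and raising to the $k$th power gives $(\sann \Lambda)^k \subseteq (\sann M)^k$. Next I would use the strong generation hypothesis. Every object $Y$ of $\sMCM(R)$ lies in $\langle M \rangle_k$, so Lemma \ref{triangulatedlemma}, applied in the triangulated category $\TT = \sMCM(R)$ with $\annT = \sann$, gives $(\sann M)^k \subseteq \sann Y$ for every such $Y$. Intersecting over all $Y$ and appealing again to Lemma \ref{mainlemma}, which presents $\ca(R)$ as $\bigcap_{Y} \sann Y$, produces $(\sann M)^k \subseteq \ca(R)$. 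Concatenating the two displayed inclusions gives $(\sann \Lambda)^k \subseteq (\sann M)^k \subseteq \ca(R)$, completing the sandwich.

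Since each step is a direct invocation of a result proved earlier, I do not anticipate a genuine obstacle; the only point requiring care is to apply Lemma \ref{triangulatedlemma} with $\TT = \sMCM(R)$, noting that $\sann$ is precisely the categorical annihilator $\annT$ appearing there, so that the passage from $M$ to an arbitrary object through $k$ triangle-building steps is exactly what forces the $k$th power $(\sann M)^k$ rather than $\sann M$ itself. This is also where the exponent $k$ in the statement originates.
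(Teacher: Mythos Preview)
Your proposal is correct and follows essentially the same route as the paper: use $\Lambda \in \MCM(R)$ for $\ca(R) \subseteq \sann \Lambda$, then Lemma \ref{endoring1} for $\sann \Lambda \subseteq \sann M$, and Lemma \ref{triangulatedlemma} with the strong generation hypothesis for $(\sann M)^k \subseteq \ca(R)$. The only difference is presentational; the paper is terser but invokes the same three ingredients in the same order.
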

 
 \begin{proof}
     As $\Lambda \in \MCM(R)$, we know $ \ca(R) \subseteq \sann_R \Lambda$. By Lemma \ref{endoring1}, we know that $\sann_R \Lambda \subseteq \sann_R M$. So, by Lemma \ref{triangulatedlemma} we have $(\sann_R \Lambda)^k \subseteq (\sann_R M)^k \subseteq \ca(R)$.
 \end{proof}
 \iffalse
 \begin{remark}
  Note that in Proposition \ref{endoring2} we do not require $\Lambda$ to have finite global dimension. However, if $\Lambda = \End_R(R \oplus M)$ has finite global dimension $\delta$, then we get $\langle M \oplus R \rangle_\delta =\sMCM(R)$. In this case, Proposition \ref{endoring2} improves the bound in Proposition \ref{ncrtheorem}.
 \end{remark}
\fi 
 \begin{lemma}
  Let $R$ be a Gorenstein ring and $M \in \MCM(R)$ so that $\Lambda = \End_R(M)$ is also in $\MCM(R)$. Then, $(\sann_R M)^2 \subseteq \sann_R \Lambda$.
 \end{lemma}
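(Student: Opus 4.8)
The plan is to reduce the statement to the factorization criterion for the stable annihilator recorded in Remark \ref{dualremark}: an element lies in $\sann(-)$ exactly when multiplication by it on the module in question factors through a projective (equivalently, free) $R$-module. Since $\sann \Lambda$ is an ideal, it suffices to show that every product $rs$ with $r, s \in \sann M$ already lies in $\sann \Lambda$, i.e. that multiplication by $rs$ on $\Lambda = \End(M)$ factors through a free $R$-module. Here the hypothesis $\Lambda \in \MCM(R)$ is what makes $\sann \Lambda$ meaningful in the first place.

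First I would record the two factorizations coming from $r, s \in \sann M$. By Remark \ref{dualremark} there are finitely generated free $R$-modules $F_1, F_2$ together with maps $\alpha_1 : M \to F_1$, $\beta_1 : F_1 \to M$ and $\alpha_2 : M \to F_2$, $\beta_2 : F_2 \to M$ satisfying
\begin{align*}
\beta_1 \alpha_1 = r \cdot \id_M, \qquad \beta_2 \alpha_2 = s \cdot \id_M.
\end{align*}
The key step is to factor the multiplication-by-$rs$ map on $\Lambda$ through the free module $\Hom(F_2, F_1)$. Define $R$-linear maps $u : \Lambda \to \Hom(F_2, F_1)$ by $u(\phi) = \alpha_1 \phi \beta_2$ and $v : \Hom(F_2, F_1) \to \Lambda$ by $v(\psi) = \beta_1 \psi \alpha_2$. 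Then, using that $r$ and $s$ are central in $R$,
\begin{align*}
v(u(\phi)) = \beta_1 \alpha_1 \phi \beta_2 \alpha_2 = (r\,\id_M)\, \phi\, (s\,\id_M) = rs\,\phi
\end{align*}
for every $\phi \in \Lambda$. Thus multiplication by $rs$ on $\Lambda$ factors through $\Hom(F_2, F_1)$, which is a free $R$-module (a finite direct sum of copies of $R$, since $F_1$ and $F_2$ are finite free). Hence $rs \in \sann \Lambda$.

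Finally I would conclude by the ideal structure: the ideal $(\sann M)^2$ is generated by the products $rs$ with $r, s \in \sann M$, and since each such product lies in the ideal $\sann \Lambda$, so does every finite sum of them, giving $(\sann M)^2 \subseteq \sann \Lambda$. This argument is essentially formal, so I do not expect a serious obstacle; the only points requiring care are checking that the composite $v \circ u$ genuinely computes multiplication by $rs$ (which uses centrality of $r,s$ to slide the scalars past $\phi$) and recognizing $\Hom(F_2,F_1)$ as free. The conceptual heart is simply that conjugating the two single-element factorizations through $\Hom(F_2,F_1)$ splices them into one factorization of the squared scalar on the endomorphism module.
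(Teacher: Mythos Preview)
Your proof is correct and rests on the same factorization criterion from Remark~\ref{dualremark} as the paper's, but the execution is organized a little differently. The paper takes the factorization $M \to F \to M$ of multiplication by $r$ and applies the functor $\Hom(M,-)$ to it, so that multiplication by $r$ on $\Lambda$ factors through $\Hom(M,F)\cong (M^*)^m$ (written as $M^m$ in the paper); it then uses a second factorization of $s$ on this intermediate module---implicitly invoking $\sann M = \sann M^*$---to push the composite through a free module $Q$. Your argument bypasses the intermediate $(M^*)^m$ altogether: by sandwiching $\phi$ as $\alpha_1\phi\beta_2$ you land directly in the free module $\Hom(F_2,F_1)$ in one step and recover $rs\,\phi$ after applying $\beta_1(-)\alpha_2$. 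This is slightly slicker, avoids the $M$ versus $M^*$ bookkeeping, and makes the symmetry in $r$ and $s$ transparent; the paper's functorial route, on the other hand, makes clearer how the argument would iterate if one wanted higher powers.
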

 
 \begin{proof}
  Let $r, s \in \sann_R M$ so that there is a commutative diagram 
  \begin{align*}
      \xymatrix{
		M \ar[dr] \ar[rr]^{r} && M\\
		& F \ar[ur]
	}
  \end{align*}
  with $F$ being free over $R$ of rank $m$. Applying $\Hom_R(M,-)$, we get
  \begin{align*}
     \xymatrix{
		\Lambda \ar[dr] \ar[rr]^{r} && \Lambda\\
		& M^m \ar[ur]
	} 
  \end{align*}
  and extending this diagram we get
  \begin{align*}
     \xymatrix{
		\Lambda \ar[dr] \ar[rr]^{r} && \Lambda  \ar[rr]^{s} && \Lambda\\
		& M^m \ar[ur] \ar[dr] \ar[rr]^{s} && M^m \ar[ur] \\
		&& Q \ar[ur]
	} 
  \end{align*}
  where $Q$ is again a free $R$-module. This shows that $rs \in \sann_R \Lambda$.
 \end{proof}
 
 Recall that a maximal Cohen-Macaulay module $M$ over a Gorenstein ring $R$ is called \textit{$n$-cluster tilting} if
 \begin{align*}
     \mathrm{add}_RM &= \lbrace X \in \MCM(R) \; : \; \Ext_R^i(M, X) = 0 \rbrace = \lbrace Y \in \MCM(R) \; : \; \Ext_R^i(Y, M) = 0 \rbrace
 \end{align*}
 where $\mathrm{add}_RM$ is the subcategory of $\module R$ consisting of modules which are isomorphic to direct summands of finite direct sums of copies of $M$. If $R$ is an isolated Gorenstein singularity with $\dim R \geq 3$ and $M$ is a $(d-2)$-cluster tilting module, then $\Lambda = \End_R(M)$ is maximal Cohen-Macaulay and $\gldim \Lambda = \dim R$ \cite[Theorem 5.2.1.]{I}. Moreover, $M$ has a free summand. Combining the above discussions, we have
 \begin{theorem}
  If $R$ is an isolated Gorenstein singularity with $\dim R \geq 3$ and $M$ is a $(d-2)$-cluster tilting module, then $(\sann_R M)^{2\dim R} \subseteq \ca(R) \subseteq \sann_R M$. In particular, stable annihilator of a $(d-2)$-cluster tilting module defines the singular locus of $R$ if in addition we assume the hypotheses of \cite[Theorem 5.3.]{IT} or \cite[Theorem 5.4.]{IT}.
 \end{theorem}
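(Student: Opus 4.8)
Throughout set $d = \dim R$. The plan is to sandwich $\ca(R)$ between $(\sann M)^{2d}$ and $\sann M$, and then pass to radicals to read off the singular locus. The inclusion $\ca(R) \subseteq \sann M$ is immediate: since $M$ is maximal Cohen--Macaulay, Lemma \ref{mainlemma} identifies $\ca(R)$ with the annihilator of the whole category $\sMCM(R)$, so in particular $\ca(R)$ annihilates $\sEnd(M)$, that is $\ca(R) \subseteq \sann M$.

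For the reverse bound $(\sann M)^{2d} \subseteq \ca(R)$ I would feed the structural input recorded just before the theorem into the endomorphism-ring machinery of this section. By \cite[Theorem 5.2.1]{I} the algebra $\Lambda := \End(M)$ is maximal Cohen--Macaulay, $\gldim \Lambda = d$, and $M$ has a free summand; write $M = M_0 \oplus N$ with $N$ the free summand and $M_0$ the complementary part. The lemma immediately preceding this theorem gives $(\sann M)^2 \subseteq \sann \Lambda$. Next I would produce a matching bound $(\sann \Lambda)^d \subseteq \ca(R)$. Since $M$ has a free summand we have $R \in \add M$, so $\add(R \oplus M) = \add M$ and hence $\gldim \End(R \oplus M) = \gldim \Lambda = d$; the remark following Proposition \ref{endoring2} then yields $\langle M \rangle_d = \sMCM(R)$. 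As $N$ is free it is a zero object in $\sMCM(R)$, so $\langle M_0 \rangle_d = \langle M \rangle_d = \sMCM(R)$, and $M_0$ is a strong generator reaching $\sMCM(R)$ in $d$ steps. Applying Proposition \ref{endoring2} with strong generator $M_0$, free-summand module $N$, and $\Lambda = \End(M_0 \oplus N) = \End(M)$ now gives $(\sann \Lambda)^d \subseteq \ca(R) \subseteq \sann \Lambda$. Stacking the two inclusions yields
\begin{align*}
 (\sann M)^{2d} = \bigl( (\sann M)^2 \bigr)^d \subseteq (\sann \Lambda)^d \subseteq \ca(R),
\end{align*}
which is the claimed containment.

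For the \emph{in particular} clause I would pass to radicals. From $(\sann M)^{2d} \subseteq \ca(R) \subseteq \sann M$ together with $\sqrt{(\sann M)^{2d}} = \sqrt{\sann M}$ we obtain $\sqrt{\ca(R)} = \sqrt{\sann M}$, hence $V(\sann M) = V(\ca(R))$. Under the hypotheses of \cite[Theorem 5.3]{IT} or \cite[Theorem 5.4]{IT} the closed set $V(\ca(R))$ is exactly the singular locus of $R$, and therefore so is $V(\sann M)$.

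The only genuinely delicate point is the identity $\langle M \rangle_d = \sMCM(R)$: it is here that the isolated-singularity and cluster-tilting hypotheses enter, through the computation $\gldim \End(R \oplus M) = \dim R$ of \cite[Theorem 5.2.1]{I} and the generation estimate underlying Theorem \ref{ncrtheorem}. Everything else is a bookkeeping assembly of already-established lemmas. I note in passing that the factor $2$ in the exponent is precisely the cost of routing through $\sann \Lambda$ via the $(\sann M)^2 \subseteq \sann \Lambda$ lemma; applying Lemma \ref{triangulatedlemma} to the strong generator $M$ directly would already give the sharper bound $(\sann M)^{d} \subseteq \ca(R)$, so the statement as written is a convenient (non-optimal) consequence of the same circle of ideas.
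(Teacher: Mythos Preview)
Your proof is correct and follows essentially the same route as the paper, which presents the theorem as an assembly of the preceding results (the lemma $(\sann M)^2 \subseteq \sann \Lambda$, the remark giving $\langle M\rangle_d = \sMCM(R)$ from $\gldim\End(R\oplus M)=d$, and Proposition \ref{endoring2}). Your closing observation that Lemma \ref{triangulatedlemma} applied directly to the strong generator $M$ already yields the sharper inclusion $(\sann M)^d \subseteq \ca(R)$ is correct and a genuine improvement on the stated exponent.
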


%\makeaddress % To print address at the end. 
\bibliography{esentepe}

\end{document}